\numberwithin{equation}{section}
\numberwithin{figure}{section}
\theoremstyle{plain}
\newtheorem{thm}{\protect\theoremname}
  \theoremstyle{plain}
  \theoremstyle{plain}
  \theoremstyle{plain}
  \newtheorem{lemma}[thm]{Lemma}
  \newtheorem{remark}[thm]{Remark}
\def\Xint#1{\mathchoice
{\XXint\displaystyle\textstyle{#1}}%
{\XXint\textstyle\scriptstyle{#1}}%
{\XXint\scriptstyle\scriptscriptstyle{#1}}%
{\XXint\scriptscriptstyle\scriptscriptstyle{#1}}%
\!\int}
\def\XXint#1#2#3{{\setbox0=\hbox{$#1{#2#3}{\int}$}
\vcenter{\hbox{$#2#3$}}\kern-.5\wd0}}
\def\avgint{\Xint-}
\def\esss{\operatornamewithlimits{ess\,sup}}
\numberwithin{equation}{section}
\newcommand{\ra}{\rightarrow}
\newcommand{\bey}{\begin{eqnarray*}}
\newcommand{\eey}{\end{eqnarray*}}
\newcommand{\ba}{\begin{align}}
\newcommand{\ea}{\end{align}}
\newcommand{\bea}{\begin{align*}}
\newcommand{\ena}{\end{align*}}
\newcommand{\be}{\begin{equation}}
\newcommand{\ee}{\end{equation}}
\newcommand{\R}{\mathbb R}
\newcommand{\N}{\mathbb N}
\newcommand{\bc}{\begin{center}}
\newcommand{\ec}{\end{center}}
\newcommand{\al}{\alpha}
\providecommand{\theoremname}{Theorem}
  \providecommand{\corollaryname}{Corollary}
  \providecommand{\propositionname}{Proposition}
\providecommand{\theoremname}{Theorem}
\begin{document}

\author{Adam Mair }

\address{Adam Mair \\
 Department of Mathematics \\
 University of Alabama \\
 Tuscaloosa, AL 35487, USA}

\email{acmair@crimson.ua.edu}

\author{Kabe Moen}

\address{Kabe Moen \\
 Department of Mathematics \\
 University of Alabama \\
 Tuscaloosa, AL 35487, USA}

\email{kabe.moen@ua.edu}

\author{Yongming Wen}

\address{Yongming Wen, School of Mathematics and Statistics, Minnan Normal University, Zhangzhou 363000,  China} 

\email{wenyongmingxmu@163.com}

\title[Bump conditions for general commutators]{Bump conditions for general iterated commutators with applications to compactness}

\maketitle

\begin{abstract}
We prove new sufficient bump conditions for general iterated commutators of Calder\'on-Zygmund operators and fractional integral operators.  As an application of our results we derive two weight compactness theorems for higher order iterated commutators with a $CMO$ function.
\end{abstract}

\section{Introduction}

Let $L$ be a linear integral operator and consider the commutator of $L$ and a function $b$ is given by
$$[b,L]f(x)=b(x)Lf(x)-L(bf)(x).$$
In this paper we consider the general iterated commutator with symbol ${\mathbf b}=(b_1,\ldots,b_m)$
$$L_{\mathbf b}f(x)=[b_m[\ldots [b_1,L]]\ldots]f(x)$$
when $L$ is a Calder\'on-Zygmund operator $T$ or fractional integral operator $I_\al$. The action 
$$(b_1,\ldots,b_m,f) \mapsto L_{\mathbf b}f$$
defines an $(m+1)$-linear operator. For this reason, $L_{\mathbf b}$ is sometimes referred to as a multilinear commutator; not to be confused with a commutator of a multilinear operator.   In the case of a Calder\'on-Zygmund operator with kernel $K(x,y)$ it is formally given by
$$T_{\mathbf b}f(x)=\int_{\R^n}\prod_{j=1}^m(b_j(x)-b_j(y)) K(x,y)f(y)\,dy.$$
These general iterated commutators where first introduced by P\'erez and Trujillo-Gonz\'alez \cite{MR1895740}, who proved endpoint estimates and weighted inequalities. When $L$ is the fractional integral operator $I_\al$ 
$$I_{\al,\mathbf b}f(x)=\int_{\R^n}\prod_{j=1}^m(b_j(x)-b_j(y)) \frac{f(y)}{|x-y|^{n-\al}}\,dy.$$
 Later, Gorosito, Pradolini, and Salinas \cite{MR2352296} proved similar results for $I_{\al,\mathbf b}$.  The operators $L_{(b,\ldots,b)}=L^m_{b}$ are called the $m$-th iterated commutators and have been studied by a number of mathematicians.

It is well known that the $A_p$ condition
\[ [w]_{A_p}=\sup_Q \left( \fint_Q w \right) \left( \fint_Q w^{1-p'} \right)^{p-1} < \infty \]
is sufficient for singular integral operators to be bounded on $L^p(w)$.  However Muckenhoupt and Wheeden in \cite{MR0417671} showed that the correlating two-weight $A_p$ condition
\[ \sup_Q \left( \fint_Q u \right)^{\frac1p} \left( \fint_Q v^{1-p'} \right)^{\frac1{p'}}=\sup_Q\|u^{\frac1p}\|_{L^p(\frac{Q}{|Q|})}\|v^{-\frac1p}\|_{L^{p'}(\frac{Q}{|Q|})}  < \infty \]
is almost never sufficient for the $L^p(v)\rightarrow L^p(u)$ two weight boundedness of operators (see \cite{MR2797562} for more details).  To ameliorate this, the stronger $A_p$-bump condition was introduced:
\[ \sup_Q \|u^{\frac{1}{p}}\|_{X,Q}\|v^{-\frac{1}{p}}\|_{Y,Q} < \infty, \]
where $X$ and $Y$ are slightly larger norms than $L^p$ and $L^{p'}$ respectively. To state these results, we recall some background material on Young functions and Orlicz spaces. A function $A:[0,\infty)\ra [0,\infty)$ is called a Young function if it is increasing, convex, $A(0) = 0$ and $A(t)/t \rightarrow \infty$ as $t\ra \infty$. Given a Young function $A$, there exists another Young function $\bar{A}$ which we call the associate function of $A$, and the two functions satisfy
\[ A^{-1}(t) \bar{A}^{-1}(t) \approx t. \]
The Orlicz average with respect to $A$ of $f$ over $Q$ is given by
\[ \|f\|_{A,Q} = \inf \left\{ \lambda > 0 \,:\, \fint_Q A \left( \frac{|f(x)|}{\lambda} \right)dx \leq 1 \right\}. \]
The quantity $\|\cdot\|_{A,Q}$ is indeed a norm.  Also note that if $A(t) = t^p$, $p \geq 1$, then
\[ \|f\|_{A,Q} = \|f\|_{L^p,Q}=|Q|^{-\frac1p}\|f\chi_Q\|_{L^p} \]
is the usual $L^p$ average. When $A(t)=t^p\log(e+t)^a$ we will write
\[ \|f\|_{A,Q} =\|f\|_{L^p(\log L)^a,Q} . \]
For $B(t)=e^t-1$ we will write
\[ \|f\|_{B,Q} =\|f\|_{\exp L,Q} . \]

The $B_p$ integrability condition was introduced by P\'erez in \cite{MR1291534} as a way to quantify the bump conditions.  Given $1<p<\infty$, we say that a Young function belongs to $B_p$ if
\begin{equation}\label{Bp}\int_1^\infty \frac{A(t)}{t^p}\,\frac{dt}{t}<\infty.\end{equation}
Typical Young functions that satisfy the $B_p$ condition are $A(t)=t^{p-\delta}$ and $A(t)=t^p\log(e+t)^{-1-\delta}$ for some $\delta>0$. P\'erez showed that the condition
$$\sup_Q\left( \fint_Q u \right)^{\frac1p} \|v^{-\frac1p}\|_{B,Q}<\infty$$
for some $\bar{B}\in B_p$ is sufficient for the Hardy-Littlewood maximal function
$$Mf(x)=\sup_{Q\ni x}\fint_Q |f(y)|\,dy,$$
to be bounded from $L^p(v)$ to $L^p(u)$.  P\'erez also showed that the condition
\begin{equation}\label{doublebumpfrac}\sup_Q|Q|^{\frac{\al}{n}+\frac1q-\frac1p}\|u^{\frac1q}\|_{A,Q} \|v^{-\frac1p}\|_{B,Q}<\infty\end{equation}
where $\bar{A}\in B_{q'}$ and $\bar{B}\in B_p$ is sufficient for $I_\al:L^p(v)\ra L^q(u)$ when $1<p\leq q<\infty$.  In the strict off-diagonal case, $p<q$, Cruz-Uribe and the second author improved \eqref{doublebumpfrac} in two ways. First, the  $B_{p,q}$ condition is introduced for a Young function $A$
\begin{equation}\label{Bpq}\int_1^\infty \frac{A(t)^{\frac{q}{p}}}{t^q}\,\frac{dt}{t}<\infty.\end{equation}
The $B_{p,q}$ is a weaker condition on the Young function in the sense that $B_p\subsetneq B_{p,q}$ when $p<q$ (of course $B_p=B_{p,p}$). Second, they separated the bumps and showed that
$$\sup_Q|Q|^{\frac{\al}{n}+\frac1q-\frac1p}\left( \fint_Q u \right)^{\frac1p} \|v^{-\frac1p}\|_{A,Q}+\sup_Q|Q|^{\frac{\al}{n}+\frac1q-\frac1p}\|u^{\frac1q}\|_{B,Q}\left( \fint_Q v^{1-p'} \right)^{\frac1{p'}}<\infty$$
for some $\bar{A} \in B_{q',p'}$ and $\bar{B}\in B_{p,q}$ is sufficient for $I_\al:L^p(v)\ra L^q(u)$.  Interestingly, these separated bumps are not know to be sufficient in the case $p=q$ for $I_\al$. For Calder\'on-Zygmund operators Lerner \cite{Lerner2weightbound} showed that 
\begin{equation}\label{doublebumpfrac}\sup_Q\|u^{\frac1p}\|_{A,Q} \|v^{-\frac1p}\|_{B,Q}<\infty\end{equation}
where $\bar{A}\in B_{p'}$ and $\bar{B}\in B_p$ is sufficient for $T$ and for the maximal truncation operator $T^\sharp$ to be bounded from $L^p(v)$ to $L^p(u)$.

For commutators, the story is more complicated.  Cruz-Uribe and the second author \cite{DCUKabe_SharpInequalities_Commutators} began the study of bump conditions for commutators with $BMO$ functions (see Section.  
In \cite{DCUKabe_SharpInequalities_Commutators} it shown that the condition
\begin{equation}\label{commbump}\sup_Q\|u^{\frac1p}\|_{L^p(\log L)^{2p-1+\delta},Q} \|v^{-\frac1p}\|_{L^{p'}(\log L)^{2p'-1+\delta},Q}<\infty\end{equation}
for some $\delta>0$ is sufficient for the inequality $$\|[b,T]f\|_{L^p(u)}\leq C\|b\|_{BMO}\|f\|_{L^p(v)}.$$  
For fractional integrals  
\begin{equation}\label{commbumpfrac}\sup_Q|Q|^{\frac{\al}{n}+\frac1q-\frac1p}\|u^{\frac1q}\|_{L^q(\log L)^{(m+1)q-1+\delta},Q} \|v^{-\frac1p}\|_{L^{p'}(\log L)^{(m+1)p'-1+\delta},Q}<\infty\end{equation}
for $\delta>0$ is sufficient for 
  $$\|(I_\al)_b^mf\|_{L^p(u)}\leq C\|b\|^m_{BMO}\|f\|_{L^p(v)}.$$  
(see also \cite{MR2250646}).  Recently these inequalities have been improved in several directions. First Lerner, Ombrosi, and Rivera-R\'ios \cite{Iterated_Comm_Bound} found a way to separate the bump conditions that works for higher order commutators of Calder\'on-Zygmund operators $T_b^m$.  However, Isralowitz, Treil, and Pott \cite{MR4454483} and Cruz-Uribe, the second author, and Tran \cite{QuanOscClass} found a way to combine the oscillation class with the weighted class.  In particular they showed that
\begin{equation}\label{oscbump}\sup_{Q}\|(b-b_Q)^mu^{\frac1p}\|_{A,Q}\|v^{-\frac1p}||_{B,Q}+\sup_Q\|u^{\frac1p}\|_{A,Q}\|(b-b_Q)^mv^{-\frac1p}||_{B,Q}<\infty\end{equation}
for some $\bar{A}\in B_{p'}$ and $\bar{B}\in B_p$ is sufficient for 
 $$\|T^m_b f\|_{L^p(u)}\leq C\|f\|_{L^p(v)}.$$
 The last author and Wu \cite{fracbumps} extended this theory to the fractional integral case, showing that
 \begin{multline}\label{oscbumpfrac}\sup_{Q}|Q|^{\frac{\al}{n}+\frac1q-\frac1p}\|(b-b_Q)^mu^{\frac1q}\|_{A,Q}\|v^{-\frac1p}||_{B,Q}\\+\sup_Q|Q|^{\frac{\al}{n}+\frac1q-\frac1p}\|u^{\frac1q}\|_{A,Q}\|(b-b_Q)^mv^{-\frac1p}||_{B,Q}<\infty\end{multline}
is sufficient for $(I_\al)^m_b:L^p(v)\ra L^q(u)$ when $\bar{A}\in B_{q'}$ and $\bar{B}\in B_{p,q}$.  The oscillation condition \eqref{oscbump} is related to Bloom's $BMO$, in the sense that it is a weighted $BMO$ assumption. However, it does not require any strong assumptions on the individual weights $u$ and $v$.  Moreover, it is important because the condition \eqref{oscbump} implies \eqref{commbumpfrac} when $b\in BMO$ and condition \eqref{oscbump} can be used to study $b$ in different oscillation classes, for example $b^k\in BMO$ for $k\in \N$ (see \cite{QuanOscClass}).  
 
In this article we extend the oscillation bump conditions \eqref{oscbump} and \eqref{oscbumpfrac} to the operators $T_{\mathbf b}$ and $I_{\al,\mathbf b}$. This is not done to generalize for the sake of generalizing, but rather as a necessary tool to study the compactness of the higher order iterated commutators.  Our main theorem for the general iterated commutator is the following.
 
\begin{thm}\label{Norm_ineq_for_CZO}
    Let $1 < p  < \infty$, $\mathbf{b} = (b_1, \ldots, b_m) \in L_{\normalfont{loc}}^1(\R^n)^m$, and $T$ be a Calder\'{o}n-Zygmund operator.  If $(u,v)$ are weights such that
    \begin{equation*}\label{Norm_ineq_sup_bound}
        \mathsf{K}=\sum_{\tau\subseteq \{1,\ldots,m\}}\sup_Q
        \left\| \prod_{i \in \tau} (b_i - (b_i)_Q)u^{\frac{1}{p}} \right\|_{A,Q} \left\| \prod_{l \in \tau^c} (b_l - (b_l)_Q)v^{-\frac{1}{p}} \right\|_{B,Q}<\infty 
    \end{equation*}
    for $\bar{A}\in B_{p'}$ and $\bar{B}\in B_p$, then 
    $$\|T_{\mathbf{b}}f\|_{L^p(u)} \lesssim \mathsf{K} \|f\|_{L^p(v)}.$$  
\end{thm}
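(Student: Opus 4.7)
The plan is to combine a pointwise sparse domination for $T_{\mathbf{b}}$ with the duality--Orlicz H\"older scheme underlying Lerner's proof of \eqref{doublebumpfrac}, using the separated-bump structure of $\mathsf{K}$ to absorb the sum over subsets $\tau$ cleanly.

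First I would establish a sparse bound of the form
\begin{equation*}
|T_{\mathbf{b}}f(x)|\lesssim \sum_{\tau\subseteq\{1,\ldots,m\}}\sum_{Q\in\mathcal{S}}\prod_{i\in\tau}|b_i(x)-(b_i)_Q|\cdot\Big\langle\prod_{l\in\tau^c}|b_l-(b_l)_Q|\cdot|f|\Big\rangle_Q\chi_Q(x),
\end{equation*}
for some sparse family $\mathcal{S}=\mathcal{S}(f,\mathbf{b})$. This is obtained from the classical stopping-cube proof for $T$ by inserting, at each stopping cube $Q$, the expansion
\begin{equation*}
\prod_{j=1}^{m}(b_j(x)-b_j(y))=\sum_{\tau\subseteq\{1,\ldots,m\}}(-1)^{|\tau^c|}\prod_{i\in\tau}(b_i(x)-(b_i)_Q)\prod_{l\in\tau^c}(b_l(y)-(b_l)_Q),
\end{equation*}
which generalizes the one-function expansion used for $T_b^m$.

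Next, by $L^p$--$L^{p'}$ duality,
\begin{equation*}
\|T_{\mathbf{b}}f\|_{L^p(u)}=\sup_{\|g\|_{L^{p'}}\leq 1}\int|T_{\mathbf{b}}f(x)|u^{1/p}(x)g(x)\,dx,
\end{equation*}
and I set $\phi=f\,v^{1/p}$, so that $\|\phi\|_{L^p}=\|f\|_{L^p(v)}$. Inserting the sparse bound and integrating yields, for each fixed $\tau$, a typical term
\begin{equation*}
\sum_{Q\in\mathcal{S}}|Q|\Big\langle u^{1/p}g\prod_{i\in\tau}|b_i-(b_i)_Q|\Big\rangle_Q\Big\langle v^{-1/p}\phi\prod_{l\in\tau^c}|b_l-(b_l)_Q|\Big\rangle_Q.
\end{equation*}
The generalized H\"older inequality in Orlicz spaces with the complementary pairs $(\bar A,A)$ and $(\bar B,B)$ gives the upper bound
\begin{equation*}
\sum_{Q\in\mathcal{S}}|Q|\,\|g\|_{\bar A,Q}\|\phi\|_{\bar B,Q}\,\Big\|u^{\frac{1}{p}}\prod_{i\in\tau}(b_i-(b_i)_Q)\Big\|_{A,Q}\Big\|v^{-\frac{1}{p}}\prod_{l\in\tau^c}(b_l-(b_l)_Q)\Big\|_{B,Q},
\end{equation*}
and the hypothesis replaces the product of the last two norms by $\mathsf{K}$ uniformly in $Q$. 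To handle what remains, I use the sparse property: each $Q$ has a major subset $E_Q\subseteq Q$ with $|Q|\leq 2|E_Q|$ and the $E_Q$'s pairwise disjoint; since $\|g\|_{\bar A,Q}\leq M_{\bar A}g(x)$ and $\|\phi\|_{\bar B,Q}\leq M_{\bar B}\phi(x)$ for $x\in E_Q$, the sum is bounded by $2\int_{\R^n}M_{\bar A}g\cdot M_{\bar B}\phi\,dx$. H\"older with exponents $p',p$ together with P\'erez's theorem---giving $L^{p'}$ boundedness of $M_{\bar A}$ from $\bar A\in B_{p'}$ and $L^{p}$ boundedness of $M_{\bar B}$ from $\bar B\in B_p$---closes the estimate with the claimed bound $\mathsf{K}\|f\|_{L^p(v)}$.

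The main obstacle I expect is the sparse domination step: the bound must preserve the precise product structure over $\tau$ and $\tau^c$ (not a cruder $\prod_j|b_j-(b_j)_Q|$ inside a single average), because only this form matches the separated bumps in the hypothesis $\mathsf{K}$ term-by-term. The stopping-cube construction via the expansion above should nevertheless go through without essential new ideas beyond the single-function case $T_b^m$, after which the duality and maximal function arguments are routine.
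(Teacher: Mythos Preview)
Your proposal is correct and follows essentially the same route as the paper: a pointwise sparse domination of $T_{\mathbf b}$ by the operators $T^{\tau}_{\mathcal S,\mathbf b}$ (the paper quotes this from \cite{CZOSparseDom} as Lemma~\ref{CZO_Commuator_Sparse_Bound}, while you sketch the stopping-cube proof via the subset expansion of $\prod_j(b_j(x)-b_j(y))$), followed by the same duality/Orlicz--H\"older/sparse-to-maximal argument that the paper records as Theorem~\ref{sparse_operator_norm_ineq} in the case $\alpha=0$, $p=q=s$. The only cosmetic difference is that the paper packages the second step as a single theorem for general $\alpha$ and $p\le s\le q$ using the fractional maximal operators $M_{\beta,\bar A}$, $M_{\gamma,\bar B}$, whereas you write it directly with $M_{\bar A}$, $M_{\bar B}$; in the CZO case these coincide.
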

For the fractional integral operator we work in the lower triangular case $p\leq q$.  
 \begin{thm}\label{Norm_ineq_for_FracOp}
    Let $1 < p\leq q  < \infty$, $0<\al<n$, and $\mathbf{b} = (b_1, \ldots, b_m) \in L_{\normalfont{loc}}^1(\R^n)^m$.  If $(u,v)$ are weights such that
    \begin{equation*}\label{Norm_ineq_sup_bound}
        \mathsf{K_\al}=\sum_{\tau\subseteq \{1,\ldots,m\}}\sup_Q|Q|^{\frac{\al}{n}+\frac1q-\frac1p}
        \left\| \prod_{i \in \tau} (b_i - (b_i)_Q)u^{\frac{1}{q}} \right\|_{A,Q} \left\| \prod_{l \in \tau^c} (b_l - (b_l)_Q)v^{-\frac{1}{p}} \right\|_{B,Q}<\infty 
    \end{equation*}
    for any $p\leq s\leq q$ such that $\bar{A}\in B_{q',s'}$ and $\bar{B}\in B_{p,s}$, then 
    $$\|I_{\al,\mathbf{b}}f\|_{L^p(u)} \lesssim \mathsf{K}_\al \|f\|_{L^p(v)}.$$  
\end{thm}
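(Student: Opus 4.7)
The plan is to adapt the dual--sparse / Orlicz--H\"older strategy that underlies the single-symbol oscillation bumps \eqref{oscbump}--\eqref{oscbumpfrac} to the iterated symbol $\mathbf{b}=(b_1,\ldots,b_m)$. For any cube $Q$, the telescoping identity
\[
\prod_{j=1}^m(b_j(x)-b_j(y))=\sum_{\tau\subseteq\{1,\ldots,m\}}(-1)^{|\tau^c|}\prod_{i\in\tau}(b_i(x)-(b_i)_Q)\prod_{l\in\tau^c}(b_l(y)-(b_l)_Q)
\]
separates the $x$- and $y$-dependence of the kernel of $I_{\alpha,\mathbf{b}}$ into $2^m$ pieces indexed by $\tau$. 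Combined with a sparse domination for $I_{\alpha,\mathbf{b}}$ analogous to the Lerner--Ombrosi--Rivera-R\'ios type domination used for $T_{\mathbf{b}}$ and its fractional counterparts, this yields a sparse family $\mathcal{S}$ and a pointwise bound
\[
|I_{\alpha,\mathbf{b}}f(x)|\lesssim\sum_{\tau}\sum_{Q\in\mathcal{S}}|Q|^{\alpha/n}\prod_{i\in\tau}|b_i(x)-(b_i)_Q|\left(\fint_Q\prod_{l\in\tau^c}|b_l-(b_l)_Q||f|\right)\chi_Q(x).
\]

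I would then dualize the target norm, writing $\|I_{\alpha,\mathbf{b}}f\|_{L^q(u)}=\sup_{\|h\|_{L^{q'}}=1}\int I_{\alpha,\mathbf{b}}f\cdot h u^{1/q}\,dx$, and on each cube apply the generalized H\"older inequality in Orlicz norms to obtain
\[
\int_Q\prod_{i\in\tau}|b_i-(b_i)_Q|h u^{1/q}\lesssim|Q|\left\|\prod_{i\in\tau}(b_i-(b_i)_Q)u^{1/q}\right\|_{A,Q}\|h\|_{\bar{A},Q}
\]
and
\[
\fint_Q\prod_{l\in\tau^c}|b_l-(b_l)_Q||f|\lesssim\left\|\prod_{l\in\tau^c}(b_l-(b_l)_Q)v^{-1/p}\right\|_{B,Q}\|fv^{1/p}\|_{\bar{B},Q},
\]
after splitting $f=fv^{1/p}\cdot v^{-1/p}$ and $h u^{1/q}=h\cdot u^{1/q}$ inside each cube. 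Pulling the bump hypothesis outside the sum collapses each $\tau$-piece to the controllable quantity
\[
\mathsf{K}_\alpha\sum_{Q\in\mathcal{S}}|Q|^{1+1/p-1/q}\|fv^{1/p}\|_{\bar{B},Q}\|h\|_{\bar{A},Q}.
\]

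The main obstacle is the control of this fractional sparse sum, and it is here that the intermediate exponent $s$ and the two-parameter bumps $B_{p,s}$, $B_{q',s'}$ enter. Using sparsity to write $|Q|\lesssim|E_Q|$ with the $E_Q$ pairwise disjoint, I would split $1/p-1/q=(1/p-1/s)+(1/s-1/q)$ for the exponent $s\in[p,q]$ furnished by the hypothesis. For every $x\in E_Q$ the two factors become
\[
|Q|^{1/p-1/s}\|fv^{1/p}\|_{\bar{B},Q}\leq M_{\beta,\bar{B}}(fv^{1/p})(x),\qquad |Q|^{1/s-1/q}\|h\|_{\bar{A},Q}\leq M_{\gamma,\bar{A}}(h)(x),
\]
where $\beta=n(1/p-1/s)$ and $\gamma=n(1/s-1/q)$ are fractional Orlicz maximal indices. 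Summing over the disjoint $E_Q$ and applying H\"older with exponents $s$ and $s'$ reduces everything to the boundedness $M_{\beta,\bar{B}}:L^p\to L^s$ and $M_{\gamma,\bar{A}}:L^{q'}\to L^{s'}$, which are precisely what the $B_{p,s}$ and $B_{q',s'}$ conditions deliver (off-diagonal analogs of P\'erez's classical bound). The delicate step is that one single $s$ must simultaneously make both Orlicz fractional maximals bounded; the lower-triangular assumption $p\leq q$ plus the joint hypothesis $\bar{A}\in B_{q',s'}$, $\bar{B}\in B_{p,s}$ ensures exactly that, and composing the bounds produces $\|I_{\alpha,\mathbf{b}}f\|_{L^q(u)}\lesssim\mathsf{K}_\alpha\|f\|_{L^p(v)}$.
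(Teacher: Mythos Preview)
Your proposal is correct and follows essentially the same route as the paper: pointwise sparse domination of $I_{\alpha,\mathbf b}$ by the operators $T^{\alpha,\tau}_{\mathcal S,\mathbf b}$, duality, Orlicz--H\"older on each cube, the exponent split $\frac1p-\frac1q=(\frac1p-\frac1s)+(\frac1s-\frac1q)$, and the off-diagonal maximal bounds $M_{\gamma,\bar B}:L^p\to L^s$, $M_{\beta,\bar A}:L^{q'}\to L^{s'}$ coming from $\bar B\in B_{p,s}$ and $\bar A\in B_{q',s'}$. The one point to flag is that the sparse bound for $I_{\alpha,\mathbf b}$ with \emph{distinct} symbols $b_1,\ldots,b_m$ is not in the literature (only the case $b_1=\cdots=b_m$ is), so the paper supplies a self-contained proof of it via the grand maximal truncation $\mathcal M_{I_\alpha}$ and a Calder\'on--Zygmund stopping argument; your telescoping identity is exactly the algebraic input, but the recursive selection of the sparse cubes still has to be carried out.
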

\begin{remark} When $p<q$ we have a scale of conditions corresponding to $s$ with $p\leq s\leq q$.  When $s=p$ we have $\bar{A}\in B_{q',p'}$ and $\bar{B}\in B_{p}$ and when $s=q$ we have $\bar{A}\in B_{q'}$ and $\bar{B}\in B_{p,q}$.  This scale of conditions was first found by Tran \cite{QuanPhD}
\end{remark}

When we turn to the bump conditions on the individual weights we have the following Corollaries. 

\begin{thm}\label{Norm_ineq_for_CZO_BMO}
    Let $1 < p  < \infty$, $\mathbf{b} = (b_1, \ldots, b_m) \in BMO^m$, and $T$ be a Calder\'{o}n-Zygmund operator.  If $(u,v)$ are weights such that
    \begin{equation*}
        \hat{\mathsf K}= \sup_Q\|u^{\frac{1}{p}} \|_{L^p(\log L)^{(m+1)p-1+\delta},Q} \| v^{-\frac{1}{p}} \|_{L^{p'}(\log L)^{(m+1)p'-1+\delta},Q}<\infty 
    \end{equation*}
    for some $\delta>0$, then 
    $$\|T_{\mathbf{b}}f\|_{L^p(u)} \lesssim\hat{\mathsf K}\prod_{j=1}^m\|b_j\|_{BMO} \|f\|_{L^p(v)}.$$  
\end{thm}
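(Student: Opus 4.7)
The plan is to deduce Theorem \ref{Norm_ineq_for_CZO_BMO} from Theorem \ref{Norm_ineq_for_CZO} by producing a single pair of Young functions $A,B$, with $\bar A\in B_{p'}$ and $\bar B\in B_p$, for which the constant $\mathsf{K}$ of Theorem \ref{Norm_ineq_for_CZO} satisfies
$$\mathsf{K}\lesssim \hat{\mathsf{K}}\prod_{j=1}^m\|b_j\|_{BMO}.$$
First I would choose $A(t)=t^p\log(e+t)^{p-1+\delta}$ and $B(t)=t^{p'}\log(e+t)^{p'-1+\delta}$. Using the asymptotic $\bar A(t)\approx t^{p'}\log(e+t)^{-(p-1+\delta)(p'-1)}$ for $t$ large, the $B_{p'}$-integrability criterion \eqref{Bp} reduces to $(p-1+\delta)(p'-1)=1+\delta(p'-1)>1$, which holds for any $\delta>0$. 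The analogous computation for $B$ gives $\bar B\in B_p$.

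The heart of the argument is a generalized Hölder estimate for Orlicz averages. For each $\tau\subseteq\{1,\ldots,m\}$ I introduce the auxiliary Young functions
$$\tilde A_\tau(t)=t^p\log(e+t)^{p-1+\delta+|\tau|p},\qquad \tilde B_\tau(t)=t^{p'}\log(e+t)^{p'-1+\delta+|\tau^c|p'}.$$
A direct comparison of inverse functions establishes $\tilde A_\tau^{-1}(t)\,\log(e+t)^{|\tau|}\lesssim A^{-1}(t)$, and since $(\exp L)^{-1}(t)\approx\log(e+t)$, the generalized Hölder inequality applied to the $|\tau|+1$ factors $u^{1/p},\, b_{i_1}-(b_{i_1})_Q,\,\ldots,\,b_{i_{|\tau|}}-(b_{i_{|\tau|}})_Q$ yields
$$\bigg\|\prod_{i\in\tau}(b_i-(b_i)_Q)\,u^{1/p}\bigg\|_{A,Q}\lesssim \|u^{1/p}\|_{\tilde A_\tau,Q}\prod_{i\in\tau}\|b_i-(b_i)_Q\|_{\exp L,Q}.$$
John--Nirenberg gives $\|b_i-(b_i)_Q\|_{\exp L,Q}\lesssim\|b_i\|_{BMO}$; and since the log exponent of $\tilde A_\tau$ is at most $(m+1)p-1+\delta$ (it differs from the target by $p(m-|\tau|)\geq 0$), monotonicity of Orlicz norms yields $\|u^{1/p}\|_{\tilde A_\tau,Q}\leq \|u^{1/p}\|_{L^p(\log L)^{(m+1)p-1+\delta},Q}$. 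A symmetric estimate with $\tilde B_\tau$ handles the $v$-factor.

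Multiplying the two bounds, taking the supremum over $Q$, and summing the resulting $2^m$ contributions over $\tau\subseteq\{1,\ldots,m\}$ produces $\mathsf{K}\lesssim 2^m\,\hat{\mathsf{K}}\prod_j\|b_j\|_{BMO}$, and Theorem \ref{Norm_ineq_for_CZO} closes the argument. The main technical step will be the inverse-function bookkeeping that makes $\tilde A_\tau^{-1}(t)\log(e+t)^{|\tau|}\lesssim A^{-1}(t)$ hold for every $\tau$ with a single choice of $A$; once the log exponents are matched, the rest is routine.
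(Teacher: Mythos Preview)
Your proposal is correct and follows essentially the same route as the paper: reduce to Theorem \ref{Norm_ineq_for_CZO} by fixing a single pair $A,B$ with $\bar A\in B_{p'}$, $\bar B\in B_p$, and then use the generalized H\"older inequality (Lemma \ref{Generalized_Young_Holders}) together with John--Nirenberg to absorb the factors $b_i-(b_i)_Q$ into $\exp L$ norms. The only cosmetic difference is that the paper applies H\"older in one step, showing directly that $(\phi^{-1})^{|\tau|}U^{-1}\lesssim A^{-1}$ with $U(t)=t^p\log(e+t)^{(m+1)p-1+\delta}$, whereas you pass through the intermediate functions $\tilde A_\tau$ and then use monotonicity to reach $U$; your choice of $A$ corresponds to the paper's with $\epsilon=\delta(p'-1)$.
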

 \begin{thm}\label{Norm_ineq_for_FracOp_BMO}
    Let $1 < p\leq q  < \infty$, $0<\al<n$, and $\mathbf{b} = (b_1, \ldots, b_m) \in BMO^m$.  If $(u,v)$ are weights such that
        \begin{equation*}
       \hat{\mathsf K}_\al= \sup_Q|Q|^{\frac\al{n}+\frac1q-\frac1p}
        \|u^{\frac{1}{q}} \|_{L^q(\log L)^{(m+\frac1{s'})q+\delta},Q} \| v^{-\frac{1}{p}} \|_{L^{p'}(\log L)^{(m+\frac1s)p'+\delta},Q}<\infty 
    \end{equation*}
    for any $p\leq s\leq q$ and $\delta>0$ then 
    $$\|I_{\al,\mathbf{b}}f\|_{L^p(u)} \lesssim  \hat{\mathsf K}_\al\prod_{j=1}^m\|b_j\|_{BMO} \|f\|_{L^p(v)}.$$  
\end{thm}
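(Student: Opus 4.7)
The plan is to reduce Theorem \ref{Norm_ineq_for_FracOp_BMO} to Theorem \ref{Norm_ineq_for_FracOp}. For each subset $\tau \subseteq \{1,\ldots,m\}$ with $k = |\tau|$, I would combine the iterated John--Nirenberg estimate
\[ \Bigl\|\prod_{i\in\tau}(b_i-(b_i)_Q)\Bigr\|_{\exp L^{1/k},Q}\lesssim\prod_{i\in\tau}\|b_i\|_{BMO}, \]
(and the analogue for $\tau^c$) with the generalized H\"older inequality in Orlicz spaces to upgrade the BMO bump condition $\hat{\mathsf K}_\alpha$ to the oscillation bump condition $\mathsf K_\alpha$ appearing in Theorem \ref{Norm_ineq_for_FracOp}.

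Concretely, recall $(\exp L^{1/k})^{-1}(t)\approx\log(e+t)^k$. Setting $D(t)=t^q\log(e+t)^{(m+1/s')q+\delta}$ (the $u$-bump in the BMO theorem), the generalized H\"older inequality requires $(\exp L^{1/k})^{-1}(t)\,D^{-1}(t)\lesssim A^{-1}(t)$, which forces the natural choice
\[ A(t)\approx t^q\log(e+t)^{(m-k+1/s')q+\delta}, \]
and yields
\[ \Bigl\|\prod_{i\in\tau}(b_i-(b_i)_Q)u^{1/q}\Bigr\|_{A,Q}\lesssim\prod_{i\in\tau}\|b_i\|_{BMO}\cdot\|u^{1/q}\|_{D,Q}. \]
Repeating the argument on the $v^{-1/p}$ side with $m-k$ BMO factors and $D'(t)=t^{p'}\log(e+t)^{(m+1/s)p'+\delta}$ produces $B(t)\approx t^{p'}\log(e+t)^{(k+1/s)p'+\delta}$ and the analogous bound
\[ \Bigl\|\prod_{l\in\tau^c}(b_l-(b_l)_Q)v^{-1/p}\Bigr\|_{B,Q}\lesssim\prod_{l\in\tau^c}\|b_l\|_{BMO}\cdot\|v^{-1/p}\|_{D',Q}. \]
Summed over $\tau$, these estimates give $\mathsf K_\alpha\lesssim\hat{\mathsf K}_\alpha\prod_j\|b_j\|_{BMO}$, and then Theorem \ref{Norm_ineq_for_FracOp} closes the argument.

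The main technical task, and the only real obstacle, is checking that the Young functions $A$ and $B$ just constructed satisfy $\bar A\in B_{q',s'}$ and $\bar B\in B_{p,s}$. A direct calculation gives $\bar A(t)\approx t^{q'}\log(e+t)^{-[(m-k+1/s')q+\delta](q'-1)}$, and using the identity $(q'-1)s'/q'=s'/q$ reduces $\bar A\in B_{q',s'}$ to the convergence of
\[ \int_1^\infty\log(e+t)^{-[(m-k)s'+1+\delta s'/q]}\,\frac{dt}{t}, \]
i.e.\ to the strict inequality $(m-k)s'+\delta s'/q>0$. The corresponding computation for $\bar B\in B_{p,s}$ reduces to $ks+\delta(p-1)s/p>0$. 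Both hold for every $0\le k\le m$ because $\delta>0$; here the non-integer shifts $1/s'$ and $1/s$ appearing in the BMO bumps are precisely what absorbs the marginal cases $k=m$ and $k=0$, respectively, and guarantees the bumped condition $\mathsf K_\alpha$ is satisfied for every admissible choice of $s$.
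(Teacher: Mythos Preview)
Your proposal is correct and follows essentially the same route as the paper: reduce to Theorem~\ref{Norm_ineq_for_FracOp} by combining the John--Nirenberg exponential integrability with the generalized H\"older inequality (Lemma~\ref{Generalized_Young_Holders}), and then verify the $B_{q',s'}$ and $B_{p,s}$ conditions on the associate functions.

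There is one small wrinkle worth noting. As written, Theorem~\ref{Norm_ineq_for_FracOp} is stated for a \emph{single} pair $(A,B)$ used uniformly across all $\tau$, whereas your $A$ and $B$ depend on $k=|\tau|$. This is not a real obstacle: either observe that the proof of Theorem~\ref{Norm_ineq_for_FracOp} (via Theorem~\ref{sparse_operator_norm_ineq}) treats each $\tau$ separately and hence tolerates $\tau$-dependent bumps, or simply take the extreme choices $A(t)\approx t^q\log(e+t)^{q/s'+\delta}$ (your $k=m$ case) and $B(t)\approx t^{p'}\log(e+t)^{p'/s+\delta}$ (your $k=0$ case), which satisfy the required H\"older inequality for every $0\le k\le m$ because $A^{-1}$ and $B^{-1}$ are then largest. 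The latter is precisely what the paper does in its proof of Theorem~\ref{Norm_ineq_for_CZO_BMO} (the fractional case is declared ``similar''): it fixes $\bar A(t)=t^{p'}/\log(e+t)^{1+\epsilon}$ independently of $j=|\tau|$ and then checks $(\phi^{-1})^jU^{-1}\lesssim A^{-1}$ using $j\le m$. Your computation and the paper's are thus the same argument run in opposite directions.
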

\begin{remark} For $p\leq s\leq q$, the powers on logarithmic terms satisfy $(m+\frac1{s'})q+\delta\leq (m+1)q-1+\delta$ and $(m+\frac1{s})p'+\delta\leq (m+1)p'-1+\delta$ which were the previous powers by Li \cite{MR2250646}.  However, when $p=q$ the condition collapses to
 \begin{equation*}
        \sup_Q|Q|^{\frac\al{n}}
        \|u^{\frac{1}{p}} \|_{L^p(\log L)^{(m+1)p+\delta},Q} \| v^{-\frac{1}{p}} \|_{L^{p'}(\log L)^{(m+1)p'+\delta},Q}<\infty .
         \end{equation*}

\end{remark}


The main application our results is to prove compactness of higher order iterated commutators. Recall that a linear operator $T:X \rightarrow Y$ between two Banach spaces is compact if $T(B_X)$ has compact closure in $Y$ ($B_X$ being the the unit ball in $X$). In \cite{MR467384} Uchiyama showed that restricting $b$ to a subset $CMO \subseteq BMO$ gives us sufficient conditions for a Calder\'on-Zygmund operator to satisfy $[b,T]:L^p \rightarrow L^p$ being a compact operator (see \cite{2022arXiv220810311M} for more context on recent work regarding compactness with respect to $T$).  The first and second authors in \cite{2022arXiv220810311M} established conditions on a two-weight system $(u,v)$ such that $b \in CMO$ is sufficient to establish that $[b,T]:L^p(v) \rightarrow L^q(u)$ is a compact operator.  When studying the compactness of the operator $[b,T]=T^1_b$ the linearity of the map $b\mapsto [b,T]$ is important and does not readily transfer to the operator $T^m_b$.  However, by analyzing the operator $T_{\mathbf b}$ we are able to avoid this obstacle. As an application to the general iterated norm inequalities we prove in section \ref{sparse_section}, we extend this result to iterated commutators:

\begin{thm}\label{main_app_CZO}
    Let $1 < p < \infty$, $b \in CMO(\R^n)$, and $T$ a Calder\'{o}n-Zygmund operator.  If $(u,v)$ are a pair of weights satisfying
    \[ \sup_Q \|u^{\frac{1}{p}}\|_{L^p(\log L)^{(m+1)p-1+\delta},Q}\|v^{-\frac{1}{p}}\|_{L^{p'}(\log L)^{(m+1)p'-1+\delta},Q} < \infty \]
    for some $\delta > 0$, then $T_b^m$ is a compact operator from $L^p(v)$ to $L^p(u)$ for all natural numbers $m$.
\end{thm}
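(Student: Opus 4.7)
The plan is to combine the multilinearity of the symbol map $\mathbf b \mapsto T_{\mathbf b}$ with an approximation argument so as to reduce the problem to the case $b \in C_c^\infty(\R^n)$, and then verify a weighted Fr\'echet--Kolmogorov--Riesz criterion directly. The central algebraic observation is that, even though $b \mapsto T_b^m$ is not linear, the map $\mathbf b \mapsto T_{\mathbf b}$ \emph{is} multilinear, so $b \mapsto T_{(b,\ldots,b)}$ is a homogeneous polynomial of degree $m$ in a single symbol $b$. This lets me telescope differences using other, simpler symbol vectors; this is precisely the point highlighted in the paragraph preceding the statement.

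Since $CMO(\R^n)$ is by definition the $BMO$-closure of $C_c^\infty(\R^n)$, I choose $b_k \in C_c^\infty(\R^n)$ with $\|b-b_k\|_{BMO} \to 0$. Multilinearity then gives
\begin{equation*}
T_b^m - T_{b_k}^m \;=\; \sum_{j=0}^{m-1} T_{(\,\underbrace{\scriptstyle b_k,\ldots,b_k}_{j},\; b-b_k,\; \underbrace{\scriptstyle b,\ldots,b}_{m-1-j}\,)}.
\end{equation*}
Each summand is an iterated commutator whose symbol vector contains one coordinate equal to the small perturbation $b-b_k$. Applying Theorem \ref{Norm_ineq_for_CZO_BMO} (or more flexibly Theorem \ref{Norm_ineq_for_CZO}) to each term and using the hypothesized bump condition \eqref{commbump}, I obtain
\begin{equation*}
\|T_b^m - T_{b_k}^m\|_{L^p(v)\to L^p(u)} \;\lesssim\; \hat{\mathsf K}\, \|b-b_k\|_{BMO}\bigl(\|b\|_{BMO}+\|b_k\|_{BMO}\bigr)^{m-1} \longrightarrow 0.
\end{equation*}
Since the compact operators form a norm-closed subspace of the bounded operators $L^p(v)\to L^p(u)$, it suffices to show that $T_{b_k}^m$ is compact for each fixed $b_k \in C_c^\infty(\R^n)$.

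For a fixed $b \in C_c^\infty(\R^n)$, I would verify the weighted Fr\'echet--Kolmogorov--Riesz characterization of precompact subsets of $L^p(u)$, following the strategy of \cite{2022arXiv220810311M} for $m=1$. Namely, for a bounded $F \subset L^p(v)$ one needs: (i) uniform $L^p(u)$-boundedness of $T_b^m F$ (immediate from Theorem \ref{Norm_ineq_for_CZO_BMO}); (ii) uniform smallness at infinity $\|\chi_{\{|x|>R\}}T_b^m f\|_{L^p(u)} \to 0$ as $R \to \infty$ uniformly in $f \in F$; and (iii) equicontinuity of translates $\|T_b^m f(\cdot+h) - T_b^m f(\cdot)\|_{L^p(u)} \to 0$ as $h \to 0$ uniformly in $f \in F$. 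Tightness (ii) follows from the fact that compactly supported $b$ forces $\prod_{j=1}^m (b(x)-b(y))$ to vanish once both $x$ and $y$ leave a fixed compact set, so the Calder\'on--Zygmund kernel decay controls the tail. For (iii) I would perform a kernel truncation away from the diagonal and from infinity to isolate a Hilbert--Schmidt-type piece whose compactness is automatic, then use the polynomial structure above to recast every truncation error as an iterated commutator $T_{\mathbf c}$ with one coordinate of $\mathbf c$ a small smooth perturbation; the bump hypothesis absorbs these errors uniformly.

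The main obstacle is step (iii), the equicontinuity of translates in a genuinely two-weight setting where we cannot assume any pointwise regularity of $u$ or $v$. The single-commutator argument of \cite{2022arXiv220810311M} extends here in principle because the $m$ additional factors $b(x)-b(y)$ with $b \in C_c^\infty$ only \emph{improve} the kernel (via H\"older regularity on the diagonal and effective compact support), but the combinatorial bookkeeping for higher $m$ is heavier. What makes the argument go through cleanly is exactly the multilinear viewpoint: every auxiliary decomposition is again an object of the form $T_{\mathbf c}$, to which the bump norm inequalities of Theorems \ref{Norm_ineq_for_CZO} and \ref{Norm_ineq_for_CZO_BMO} apply uniformly.
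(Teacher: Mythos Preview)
Your reduction to $b\in C_c^\infty$ via the telescoping identity and Theorem~\ref{Norm_ineq_for_CZO_BMO} is exactly what the paper does, and your identification of the Fr\'echet--Kolmogorov--Riesz criterion (Lemma~\ref{GuoZhao}) as the endgame is correct. Parts (a) and (b) proceed essentially as you say.

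Where your sketch diverges from the paper is in step (iii), and here there is a genuine gap in your outline. You propose to ``isolate a Hilbert--Schmidt-type piece whose compactness is automatic'' and to absorb the truncation errors by recasting them as commutators $T_{\mathbf c}$ with a small symbol coordinate. Neither of these is what the paper does, and neither is obviously viable in the two-weight setting. The paper instead introduces a \emph{smooth} truncation $K^\eta$ of the kernel near the diagonal (not at infinity), shows $\|T_b^{m,\eta}f - T_b^m f\|_{L^p(u)} \lesssim \eta^m\|Mf\|_{L^p(u)}$ directly from the extra $|b(x)-b(y)|^m \lesssim |x-y|^m$ decay, and then verifies \emph{all three} conditions of Lemma~\ref{GuoZhao} for the truncated operator $T_b^{m,\eta}$, not for $T_b^m$ itself. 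This second reduction is essential: the raw kernel is too singular on the diagonal for equicontinuity to go through directly.

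For the equicontinuity of $T_b^{m,\eta}$ the paper writes $T_b^{m,\eta}f(x+h)-T_b^{m,\eta}f(x)=Af+Bf$, where $Af$ carries the kernel difference and is controlled by $|h|\,Mf$ via the smoothness estimate on $K^\eta$, and $Bf$ carries the symbol difference $(b(x+h)-b(y))^m-(b(x)-b(y))^m$. After a binomial expansion, $Bf$ is estimated not by another iterated commutator but by a finite sum of terms $T^\sharp(b^{k}f)$ and $Mf$, using the two-weight boundedness of the maximal truncation $T^\sharp$ and of $M$ (both of which follow from the assumed bump condition via \eqref{czobump} and \eqref{maxbump}). The multilinear commutator viewpoint is used only for the $CMO\to C_c^\infty$ reduction, not inside the equicontinuity argument.
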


In the one weight setting it is shown in \cite{MR3834654} that $b \in CMO$ characterizes compactness of the operator $[b,I_\alpha]:L^p(w^p) \rightarrow L^q(w^q)$.  However this requires the assumption that $w \in A_{p,q}$, which implies that $w^p \in A_p$ and $w^q \in A_q$.  We will prove general iterated commutator norm inequalities in section \ref{sparse_section} using similar sparse domination arguments to the ones used for Calder\'{o}n-Zygmund operators.  We then prove bump conditions for iterated commutators of the Riesz potential to be compact operators.

\begin{thm}
    \label{main_app_frac}
    Let $0 < \alpha < n$, $1 < p \leq q < \infty$, $b \in CMO(\R^n)$. If $(u,v)$ are a pair of weights satisfying 
    \begin{equation}\label{Iterated_Riesz_Commutator_Bump_Conditions}
        \sup_Q |Q|^{\frac{\alpha}{n} + \frac{1}{q} - \frac{1}{p}}\|u^{\frac{1}{q}}\|_{L^q(\log L)^{(m+\frac1{s'})q+\delta},Q}\|v^{-\frac{1}{p}}\|_{L^{p'}(\log L)^{(m+\frac1s)p' +\delta},Q} < \infty
    \end{equation}
    for any $p\leq s\leq q$ and $\delta > 0$, then $(I_\alpha)_b^m:L^p(v) \rightarrow L^q(u)$ is a compact operator for all natural numbers $m$.
\end{thm}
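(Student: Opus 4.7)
The plan is to reduce the statement to a compactness assertion for the general iterated commutator $(I_\al)_{\mathbf b}$, and then exploit the multilinearity of the map $\mathbf b\mapsto (I_\al)_{\mathbf b}$. Specializing to $\mathbf b=(b,\ldots,b)$ recovers $(I_\al)_b^m$, so it suffices to prove: if each $b_j\in CMO(\R^n)$ and the bump condition \eqref{Iterated_Riesz_Commutator_Bump_Conditions} holds, then $(I_\al)_{\mathbf b}\colon L^p(v)\to L^q(u)$ is compact. This is exactly the kind of reduction that motivated developing the $(m+1)$-linear theory in the first place, since $b\mapsto (I_\al)_b^m$ is not linear in $b$ and does not interact well with BMO approximation.

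First I would approximate. Since $CMO$ is the $BMO$-closure of $C_c^\infty(\R^n)$, pick sequences $b_j^k\in C_c^\infty(\R^n)$ with $\|b_j-b_j^k\|_{BMO}\to 0$. Multilinearity of $\mathbf b\mapsto (I_\al)_{\mathbf b}$ lets me write $(I_\al)_{\mathbf b}-(I_\al)_{\mathbf b^k}$ as a telescoping sum of $m$ operators of the form $(I_\al)_{\mathbf c}$, where exactly one entry of $\mathbf c$ equals $b_j-b_j^k$ and the other entries are drawn from $\{b_i,b_i^k\}$. Applying Theorem~\ref{Norm_ineq_for_FracOp_BMO} to each summand gives
\[
\|(I_\al)_{\mathbf b}-(I_\al)_{\mathbf b^k}\|_{L^p(v)\to L^q(u)}\lesssim \hat{\mathsf K}_\al\sum_{j=1}^m \|b_j-b_j^k\|_{BMO}\prod_{i\neq j}\max\bigl(\|b_i\|_{BMO},\|b_i^k\|_{BMO}\bigr),
\]
which tends to $0$ as $k\to\infty$. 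Because compact operators form a norm-closed subspace of the bounded operators $L^p(v)\to L^q(u)$, it is enough to verify that each $(I_\al)_{\mathbf b^k}$ is compact when every coordinate lies in $C_c^\infty(\R^n)$.

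The second step is thus to establish compactness of $(I_\al)_{\mathbf b^k}$ for smooth, compactly supported symbols, for which I would use the weighted Fr\'echet--Kolmogorov criterion on $L^q(u)$: uniform $L^q(u)$-boundedness, equicontinuity under translations, and tightness at infinity. The kernel $\prod_j(b_j^k(x)-b_j^k(y))|x-y|^{\al-n}$ I would split into a near-diagonal piece on $|x-y|<\epsilon$, a far-tail piece on $|x-y|>R$, and a bulk piece on $\epsilon\le |x-y|\le R$. Smoothness of the $b_j^k$ supplies the mean-value vanishing $\prod_j|b_j^k(x)-b_j^k(y)|\lesssim |x-y|^m$, which tames the near-diagonal term, while compact support disposes of the far-tail term. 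The bulk piece has a bounded, continuous, compactly supported kernel, hence yields a compact operator from $L^p(v)$ to $L^q(u)$. This is modeled on the compactness arguments in \cite{2022arXiv220810311M} for the CZO case and \cite{MR3834654} for the single-commutator fractional case.

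The main obstacle is making each of the three splittings quantitative in the two-weight bump setting. Concretely, the smallness estimates for the near-diagonal and far-tail pieces must be delivered not in unweighted $L^q$ but in $L^q(u)$, and the only structural information available on $(u,v)$ is the Orlicz bump condition \eqref{Iterated_Riesz_Commutator_Bump_Conditions}. I expect the right vehicle is a sparse-type or dyadic decomposition combined with the Orlicz H\"older inequality, so that each truncated piece is controlled by the quantity $\hat{\mathsf K}_\al$ times a geometric factor in $\epsilon$ or $1/R$; this parallels the sparse domination we already invoke for the norm inequalities in Section~\ref{sparse_section}. Once this quantitative splitting is achieved, the Fr\'echet--Kolmogorov conditions follow, and combined with the approximation step the proof is complete.
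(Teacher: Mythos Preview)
Your reduction from $b\in CMO$ to $b\in C_c^\infty$ via the multilinearity of $\mathbf b\mapsto (I_\al)_{\mathbf b}$ and Theorem~\ref{Norm_ineq_for_FracOp_BMO} is correct and is precisely what the paper does; likewise the plan to invoke the weighted Fr\'echet--Kolmogorov criterion (Lemma~\ref{GuoZhao}) once $b$ is smooth and compactly supported.

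The gap is in your treatment of the far-tail piece. Compact support of $b$ does not force the operator with kernel restricted to $|x-y|>R$ to be small in operator norm. For $x\in\mathsf{supp}\,b$ and $R$ large one has $b(y)=0$ on the region of integration, so the far-tail contribution there reads $b(x)^m\int_{|x-y|>R}|x-y|^{\al-n}f(y)\,dy$; testing against $f$ supported in $\{|y|>2R\}$ this equals $b(x)^m I_\al f(x)$ on $\mathsf{supp}\,b$, with no $R$-dependence at all. A sparse bound on this piece can only return $\hat{\mathsf K}_\al\|f\|_{L^p(v)}$, again with no factor of $1/R$, so the anticipated ``geometric factor in $1/R$'' never materializes and the decomposition into a compact bulk operator plus small remainders does not close.

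The paper sidesteps this entirely by truncating \emph{only} near the diagonal. One replaces $|x-y|^{\al-n}$ by a smooth $K_\al^\eta$ vanishing on $|x-y|\le\eta$, agreeing with $|x-y|^{\al-n}$ on $|x-y|\ge 2\eta$, and obeying the same size and gradient bounds; then $|(I_\al^\eta)_b^m f-(I_\al)_b^m f|\lesssim\eta^m M_\al f$, and it remains to verify Lemma~\ref{GuoZhao} for $(I_\al^\eta)_b^m$ directly. Boundedness comes from domination by $I_\al|f|$. Tightness uses the pointwise decay $|(I_\al^\eta)_b^m f(x)|\le C_{b,v}|x|^{-(n-\al)}$ for $|x|$ large together with $M_\al\chi_{[-1,1]^n}(x)\gtrsim(1+|x|)^{-(n-\al)}$ and $M_\al\colon L^p(v)\to L^q(u)$. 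Equicontinuity writes $(I_\al^\eta)_b^m f(x+h)-(I_\al^\eta)_b^m f(x)=Af+Bf$, where $|Af|\lesssim\frac{|h|}{\eta}M_\al f$ by the gradient estimate on $K_\al^\eta$, and $|Bf|\lesssim |h|\big(I_\al|f|+M_\al f\big)$ via $|b(x+h)-b(x)|\le|h|\|\nabla b\|_{L^\infty}$. Every term is controlled by $M_\al$ or $I_\al$, both bounded $L^p(v)\to L^q(u)$ under~\eqref{Iterated_Riesz_Commutator_Bump_Conditions}; no sparse machinery is needed at this stage.
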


\section{Preliminaries}\label{prelim_section}

Given a measurable function $b$ and a cube $Q$, by $b_Q$ we denote the average value of $b$ on $Q$:
\[ b_Q = \fint_Q b(x)dx. \]
We say $b$ is of bounded mean oscillation, denoted $BMO$, if
\[ \|b\|_{BMO} = \sup_Q \fint_Q \left| b(x) - b_Q \right|dx < \infty. \]
This quantity fails to be a norm since any constant function $c$ is such that $\|c\|_{BMO} = 0$.  We remedy this by considering the space $BMO$ modulo constants.  Define the space $CMO$ as the closure of $C_c^\infty (\R^n)$ in the $BMO$ norm $\|\cdot\|_{BMO}$. Recall that $BMO$ functions satisfy the John-Nirenberg inequality,
$$\fint_Q\exp\Big(\frac{c|b(x)-b_Q|}{\|b\|_{BMO}}\Big)\,dx\leq C$$
for some constants $c,C>0$.  In terms of Orlicz norms the John-Nirenberg inequality implies
$$\|b\|_{BMO}\approx \sup_Q\|b-b_Q\|_{\exp L,Q}.$$

The following characterizations follow the work by Cruz-Uribe, Martell, and P\'{e}rez in \cite{MR2797562}.  When working with a pair of weights $(u,v)$ we think of non-negative, locally integrable functions such that $u$ is positive on a set of positive measure and $v$ is positive almost everywhere.  

%
%
%
%
%
%

    %
    %

We need the following multilinear version H\"older inequality for Orlicz spaces which was proven in \cite{MR1895740}.

\begin{lemma}\label{Generalized_Young_Holders}
    Let $A_1, \ldots , A_n, C$ be non-negative, continuous, strictly increasing functions on $[0,\infty)$ that satisfy
    \[ (A_1^{-1}\cdots A_n^{-1})(t) \leq C^{-1}(t) \text{ for all } t \geq 0. \]
    Also assume $C$ is a Young function.  If $Q$ is a cube and $f_1, \ldots , f_n$ are measurable functions, then
    \[ \|f_1 \cdots f_n\|_{C,Q} \leq n\|f_1\|_{A_1,Q} \cdots \|f_n\|_{A_n,Q}. \]
\end{lemma}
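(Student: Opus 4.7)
The plan is to use the classical normalization trick combined with the pointwise inequality built from the assumption on the inverses, and then exploit the convexity of $C$ to absorb the factor of $n$ inside the Orlicz norm. The proof should be short and direct; I do not anticipate a serious obstacle beyond properly tracking the role of $C$ being a Young function.

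First, by homogeneity of the Luxemburg norm I would reduce to the case where $\|f_i\|_{A_i,Q} \leq 1$ for each $i$; equivalently, after replacing $f_i$ by $f_i/\|f_i\|_{A_i,Q}$, one may assume $\fint_Q A_i(|f_i|)\,dx \leq 1$ for $i=1,\dots,n$. The target inequality then becomes $\|f_1\cdots f_n\|_{C,Q}\leq n$, which is what I aim to prove.

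Next, for each $x\in Q$ set $t_i(x)=A_i(|f_i(x)|)$, so that $|f_i(x)|=A_i^{-1}(t_i(x))$ (using strict monotonicity of $A_i$). Put $T(x)=t_1(x)+\cdots+t_n(x)$. Since every $A_i^{-1}$ is increasing and $t_i(x)\leq T(x)$, I get the pointwise domination
\[
|f_1(x)\cdots f_n(x)| \;=\; \prod_{i=1}^n A_i^{-1}(t_i(x)) \;\leq\; \prod_{i=1}^n A_i^{-1}(T(x)) \;\leq\; C^{-1}(T(x)),
\]
where the last step uses the standing hypothesis. Applying the increasing function $C$ to both sides yields the key pointwise bound
\[
C\bigl(|f_1(x)\cdots f_n(x)|\bigr) \;\leq\; T(x) \;=\; A_1(|f_1(x)|)+\cdots+A_n(|f_n(x)|).
\]

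Finally, I would use that $C$ is a Young function: since $C$ is convex with $C(0)=0$, the inequality $C(t/n)\leq C(t)/n$ holds for all $t\geq 0$. Combining this with the pointwise bound above and averaging over $Q$,
\[
\fint_Q C\!\left(\frac{|f_1\cdots f_n|}{n}\right)dx \;\leq\; \frac{1}{n}\fint_Q C(|f_1\cdots f_n|)\,dx \;\leq\; \frac{1}{n}\sum_{i=1}^n \fint_Q A_i(|f_i|)\,dx \;\leq\; 1.
\]
By the definition of the Luxemburg norm this gives $\|f_1\cdots f_n\|_{C,Q}\leq n$, and unwinding the normalization yields the claimed $\|f_1\cdots f_n\|_{C,Q}\leq n\,\|f_1\|_{A_1,Q}\cdots \|f_n\|_{A_n,Q}$. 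The only points that demand care are the validity of $\fint_Q A_i(|f_i|)\,dx\leq 1$ after normalization (which follows by letting $\lambda\downarrow \|f_i\|_{A_i,Q}$ in the defining inequality and invoking monotone convergence, since $A_i$ is continuous) and the convexity step, for which it is essential that $C$ — and not merely the $A_i$ — be a genuine Young function.
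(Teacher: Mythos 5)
Your proof is correct and is essentially the standard argument (the same one used in P\'erez and Trujillo-Gonz\'alez \cite{MR1895740}, to which the paper defers without reproducing a proof): normalize so that $\fint_Q A_i(|f_i|)\,dx\leq 1$, obtain the pointwise Young-type bound $C(|f_1\cdots f_n|)\leq\sum_i A_i(|f_i|)$ from the hypothesis on inverses, and then use convexity of $C$ to absorb the factor $n$. The two points you flag as needing care --- that the Luxemburg normalization is attained via monotone convergence, and that only $C$ (not the $A_i$) needs convexity --- are exactly the right ones to watch, and your handling of them is fine.
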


%
%

    %
    %
    %

Given a Young function $A$, define the maximal operator associated with $A$ by
\[ M_A f(x) = \sup_{Q \ni x} \|f\|_{A,Q}. \]
%
%
%
P\'{e}rez showed in \cite{CZTheory} that the $B_p$ integrability condition, \eqref{Bp}, characterizes the $L^p(\R^n)$ boundedness of $M_A$, namely, $A \in B_p$ if, and only if, $M_A:L^p(\R^n) \rightarrow L^p(\R^n)$.  We will use condition introduced in \cite{CM} similar to the $B_p$ condition but better suited to our fractional bump conditions.  If we define the fractional Orlicz maximal operator
\[ M_{\beta,A}f(x) = \sup_{Q \ni x} |Q|^{\beta/n}\|f\|_{A,Q}. \]
If $\frac{\beta}{n} = \frac{1}{p} - \frac{1}{q}$ the authors in \cite{CM} prove that the $B_{p,q}$ condition on $A$, \eqref{Bpq}, is sufficient for the boundedness $M_{\beta,A}:L^p(\R^n) \rightarrow L^q(\R^n)$.

Recall that $T$ is a Calder\'{o}n-Zygmund operator (CZO) on $\mathbb{R}^n$ if $T$ is bounded on $L^2(\mathbb{R}^n)$ and it admits the following representation
\[
Tf(x)=\int_{\mathbb{R}^n}K(x,y)f(y)dy,\quad f \in L_c^\infty(\R^n), x\notin \mathsf{supp} f.
\]
The kernel $K(x,y)$ defined on $\{(x,y)\,:\,x \neq y\}$ satisfies the size condition:
\begin{align}\label{size condition}
|K(x,y)|\leq \frac{C}{|x-y|^n},~x\neq y
\end{align}
and the smoothness condition
\begin{align}\label{smooth condition}
|K(x,y)-K(x',y)|+|K(y,x)-K(y,x')|\leq\frac{C|x-x'|}{|x-y|^{n+1}},
\end{align}
for all $|x-y|>2|x-x'|$.  We will also need the maximal truncation operator which is given by
$$T^\sharp f(x)=\sup_{\eta>0}\left|\int_{|x-y|>\eta} K(x,y)f(y)\,dy\right|.$$

In \cite{MR1291534}, it is shown that if the pair of weights satisfies the condition
\begin{equation}\label{maxbump}\sup_Q\left(\avgint_Q u\right)^{\frac1p}\|v^{-\frac1p}\|_{L^{p'}(\log L)^{p'-1+\delta},Q}<\infty,\end{equation}
for some $\delta>0$ then $M:L^p(v)\ra L^p(u)$.  For CZOs bump conditions on both weights are needed. Lerner \cite{Lerner2weightbound} showed that the condition
\begin{equation}\label{czobump}\sup_Q\|u^{\frac1p}\|_{L^p(\log L)^{p-1+\delta},Q}\|v^{-\frac1p}\|_{L^{p'}(\log L)^{p'-1+\delta},Q}<\infty\end{equation}
for some $\delta>0$ is sufficient for the boundedness of $T$ and $T^\sharp$ from $L^p(v)$ to $L^p(u)$.

Define the fractional integral operator by
\[ I_\alpha f(x) = \int_{\R^n} \frac{f(y)}{|x-y|^{n-\alpha}}dy, \quad 0 < \alpha < n. \]
As mentioned in the introduction, \cite{MR1291534}, P\'{e}rez proved that if $1 < p \leq q < \infty$ and $(u,v)$ satisfy
\[ \sup_Q |Q|^{\frac{\alpha}{n}+\frac{1}{q}-\frac{1}{p}}\|u^{\frac{1}{q}}\|_{L^q(\log L)^{q-1+\delta},Q}\|v^{-\frac{1}{p}}\|_{L^{p'}(\log L)^{p'-1+\delta},Q} < \infty \]
for $\delta >0$, then $I_\alpha:L^p(v) \rightarrow L^q(u)$.  However commutators of singular integrals are more singular than their associated operators as is seen in the different conditions needed for sharp norm inequalities.  In \cite{MR2250646}, Li showed that given \eqref{Iterated_Riesz_Commutator_Bump_Conditions}, then $I_{\alpha,b}^m:L^p(v) \rightarrow L^q(u)$.  This is sharp in the sense that it is not true for $\delta =0$ (see \cite{DCUKabe_SharpInequalities_Commutators}).  

This inequality is crucial for a reduction we make in our compactness arguments.  Note how the $m=1$ case reflects the higher singularity of the commutator from the higher power of the logarithm in our Young functions.  Also note that the condition given in \eqref{Iterated_Riesz_Commutator_Bump_Conditions} is sufficient for $I_\alpha$ and $M_\alpha$ to be bounded from $L^p(v)$ to $L^q(u)$, where $M_\alpha$ is the maximal operator associated with the Riesz potental,
\[ M_\alpha f(x) = \sup_{Q \ni x} \frac{1}{|Q|^{1- \frac{\alpha}{n}}}\int_Q |f(y)|dy. \]

To prove our compactness results will use a weighted version of the Kolmogorov-Riesz theorem due to Guo and Zhao \cite{GuoZhaoKolmog}.

\begin{lemma}\label{GuoZhao}
    Let $1 \leq p < \infty$, and let $u$ be a weight.  If $\mathcal{F} \subseteq L^p(u)$ satisfies the following conditions:
    \begin{enumerate}
        \item $\mathcal{F}$ is uniformly bounded,
        \[ \sup_{f \in \mathcal{F}}\|f\|_{L^p(u)} \lesssim 1; \]

        \item $\mathcal{F}$ uniformly vanishes at infinity:
        \[ \lim_{R\ra \infty}\sup_{f \in \mathcal{F}}\|f\chi_{\{\R^n \setminus B(0,R)\}}\|_{L^p(u)}= 0; \]

        \item $\mathcal{F}$ is uniformly equicontinuous:
        \[ \lim_{h\ra 0}\sup_{f \in \mathcal{F}}\|f(\cdot + h) - f(\cdot)\|_{L^p(u)} = 0, \]
    \end{enumerate}
    then the family $\mathcal{F}$ has compact closure in $L^p(u)$.
\end{lemma}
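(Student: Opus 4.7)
The plan is to verify that $\mathcal{F}$ is totally bounded in $L^p(u)$, which combined with the completeness of $L^p(u)$ yields the claimed precompactness of $\overline{\mathcal{F}}$. The strategy mirrors the classical Kolmogorov--Riesz argument: given $\epsilon>0$, truncate $f$ outside a large ball using (b), approximate the truncated function by piecewise constant functions on a fine cube grid using (c), and exploit the fact that these piecewise constant approximations lie in a finite-dimensional subspace of $L^p(u)$, in which bounded sets are automatically totally bounded.

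Fix $\epsilon>0$. By condition (b) choose $R>0$ so that $\sup_{f\in\mathcal{F}}\|f\chi_{\R^n\setminus B(0,R)}\|_{L^p(u)}<\epsilon/3$. By condition (c) choose $\delta>0$ such that $\|f(\cdot+t)-f\|_{L^p(u)}$ is uniformly (in $f\in\mathcal{F}$) as small as we like whenever $|t|<\delta$. Partition $B(0,R)$ into finitely many disjoint cubes $Q_1,\dots,Q_N$ of common side length $\ell=\delta/\sqrt{n}$, so that $\mathrm{diam}(Q_i)<\delta$, and introduce the cube-averaging operator
\[ Pf(x)=\sum_{i=1}^N\Big(\fint_{Q_i}f\Big)\chi_{Q_i}(x). \]

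The core estimate is
\[ \|Pf-f\chi_{B(0,R)}\|_{L^p(u)}^{p}\leq\frac{1}{\ell^n}\int_{|t|<\delta}\|f(\cdot+t)-f\|_{L^p(u)}^{p}\,dt. \]
For $x\in Q_i$, Jensen's inequality gives $|Pf(x)-f(x)|^{p}\leq\fint_{Q_i}|f(y)-f(x)|^{p}\,dy$. Multiplying by $u(x)$, integrating over $Q_i$, summing in $i$, and carrying out the change of variable $t=y-x$ with $x$ held fixed (so that the weight $u(x)$ is never shifted) produces the bound after Fubini. Condition (c) then makes the right-hand side less than $(\epsilon/3)^{p}$ once $\delta$ is small enough, uniformly in $f\in\mathcal{F}$.

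Finally, $\{Pf:f\in\mathcal{F}\}$ lies in the finite-dimensional subspace $V\subset L^p(u)$ spanned by $\chi_{Q_1},\dots,\chi_{Q_N}$, and it is norm-bounded thanks to (a) together with the estimate just obtained. In a finite-dimensional normed space every bounded set is totally bounded, so $\{Pf:f\in\mathcal{F}\}$ admits a finite $\epsilon/3$-net, and the triangle inequality assembles this into a finite $\epsilon$-net for $\mathcal{F}$. The only subtlety beyond the classical unweighted argument is that the approximation must keep the weight $u$ evaluated at the same base point throughout, which is the main technical obstacle and is precisely the reason to prefer the cube-averaging operator $P$ to a mollification: the change of variable $t=y-x$ does not act on the $u(x)$ factor, so condition (c), phrased in terms of translation differences measured in $L^p(u)$, can be applied directly.
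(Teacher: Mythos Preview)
The paper does not prove this lemma; it is stated with attribution to Guo and Zhao \cite{GuoZhaoKolmog} and used as a black box, so there is no in-paper argument to compare against. Your proposal is a correct adaptation of the classical Kolmogorov--Riesz proof to the weighted setting, and the key observation---that the cube-averaging operator keeps the weight $u(x)$ attached to the base point so that hypothesis (c) applies directly after the substitution $t=y-x$---is exactly the right one.

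Two small technical points deserve tightening. First, a Euclidean ball cannot literally be partitioned into axis-parallel cubes; simply cover $B(0,R)$ by a finite disjoint union of cubes of side $\ell$ and work on that union instead. Second, and slightly more substantively, the Lebesgue averages $\fint_{Q_i} f$ are not a priori finite for an arbitrary $f\in L^p(u)$, since membership in $L^p(u)$ does not force local Lebesgue integrability when $u$ may degenerate. However, your own computation---carried out before invoking $Pf$, using only Tonelli and the change of variables---already gives
\[
\int_{Q_i} u(x)\,\fint_{Q_i}|f(y)-f(x)|^p\,dy\,dx \;\leq\; \frac{1}{\ell^n}\int_{|t|<\delta}\|f(\cdot+t)-f\|_{L^p(u)}^p\,dt<\infty,
\]
so on any cube with $u(Q_i)>0$ there is a point $x$ with $f(x)$ finite and $\fint_{Q_i}|f(y)-f(x)|^p\,dy<\infty$, whence $\fint_{Q_i}|f|<\infty$ by Jensen; on cubes with $u(Q_i)=0$ the value of $Pf$ is irrelevant to the $L^p(u)$ norm. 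Thus the potential gap closes itself once noted, and the argument goes through.
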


\section{Sparse Domination}\label{sparse_section}

Recall that a dyadic grid $\mathcal{D}$ is a collection of half-open cubes $\prod_{i=1}^n [a_i,b_i)$ such that:
\begin{enumerate}
    \item Each cube has side-length $2^k$ for some integer $k$.  If $\mathcal{D}_k$ is the collection of cubes $Q \in \mathcal{D}$ of side-length $2^k$, then $\mathcal{D}_k$ partitions $\R^n$,
    
    \medskip
    
    \item for all $x \in \R^n$, there is a unique cube in each family $\mathcal{D}_k$ containing it,
    
    \medskip
    
    \item given any two distinct cubes in $\mathcal{D}$, they are either disjoint or one is contained in the other,
    
    \medskip
    
    \item for each cube $Q \in \mathcal{D}_k$, there is a unique cube in $Q_{k+1}$ containing it; this cube is denoted $\hat{Q}$ and is called the dyadic parent of $Q$,
    
    \medskip
    
    \item if $Q \in \mathcal{D}_k$ then there are $2^n$ cubes in $\mathcal{D}_{k-1}$ contained in $Q$.
\end{enumerate}
Given a cube $Q$ then $\mathcal{D}(Q)$ is the set of all dyadic cubes with respect to $Q$, i.e. the cubes obtained by repeatedly subdividing $Q$ and its descendants into $2^n$ congruent cubes.  An important sub-family of dyadic grids are sparse families.  A family $\mathcal{S}$ of cubes in $\R^n$ is sparse if there exists $0 < \alpha < 1$ such that for all $Q \in \mathcal{S}$ there is a measurable set $E_Q \subseteq Q$ such that $|E_Q| \geq \alpha |Q|$ and the collection $\{E_Q\}_{Q \in \mathcal{S}}$ is pairwise disjoint.

We wish to establish strong norm inequalities in the two-weight setting for both $T_\mathbf{b}$ and $I_{\alpha,\mathbf{b}}$ by way of sparse domination. Given a dyadic grid $\mathcal D$ and sparse family $\mathcal S\subseteq \mathcal D$ define the sparse operator
    \[ T_{\mathcal{S}, \mathbf{b}}^{\alpha,\tau}f(x) = 
    \sum_{Q \in \mathcal{S}} |Q|^{\frac{\alpha}{n}} \left( \prod_{i \in \tau} |b_i(x) - (b_i)_Q| \avgint_Q \prod_{l \in \tau^c} |b_l(y) - (b_l)_Q| |f(y)|dy \right)\chi_Q(x) \]
where $0\leq \al<n$ and $\tau\subseteq\{1,\ldots,m\}$.  When $\al=0$ we simply write $T_{\mathcal{S}, \mathbf{b}}^{0,\tau}=T_{\mathcal{S}, \mathbf{b}}^{\tau}$.

We now show that our iterated commutators can be bounded by a linear combination of the above sparse operators. For Calder\'on-Zygmund operators we have the following lemma, which is a simplified version of Proposition 2.1 in \cite{CZOSparseDom}:

\begin{lemma}\label{CZO_Commuator_Sparse_Bound}
    Let $T$ be a Calder\'{o}n-Zygmund operator and $\tau_m=\{1,\ldots,m\}$.  Given $\mathbf{b} = (b_1, \ldots , b_m)$ with $b_i \in L_{\normalfont{loc}}^1(\R^n)$, there exists $C = C(n,T)$ such that for any $f \in L_{c}^\infty(\R^n)$, there exists $3^n$ sparse families $S_j$ of dyadic cubes such that
    \[ |T_{\mathbf{b}}f(x)| \leq C \sum_{j=1}^{3^n} \sum_{\tau \subseteq \tau_m}T_{\mathcal{S}_j, \mathbf{b}}^{\tau}f(x).  \]
\end{lemma}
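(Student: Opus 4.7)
The plan is to follow the now-standard recursive stopping-time scheme originally due to Lerner, as adapted to general iterated commutators in \cite{CZOSparseDom}. By the classical $3^n$-dyadic-grid trick, there exist $3^n$ dyadic grids $\{\mathcal{D}^{(j)}\}_{j=1}^{3^n}$ such that every cube $Q \subset \R^n$ is contained in some $R \in \mathcal{D}^{(j)}$ with $|R|\leq 3^n|Q|$. It therefore suffices to fix one dyadic grid $\mathcal{D}$ and a cube $Q_0 \in \mathcal{D}$ containing $\mathrm{supp}(f)$, and build a sparse family $\mathcal{S}_j\subset \mathcal{D}(Q_0)$ such that the claimed pointwise bound holds a.e.\ on $Q_0$; the global statement then follows by translating and summing over the $3^n$ grids.

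The core is an iterative step. Fix a cube $Q$ in the construction (starting with $Q_0$) and expand algebraically
\begin{equation*}
\prod_{j=1}^m(b_j(x)-b_j(y)) = \sum_{\tau \subseteq \tau_m}(-1)^{|\tau^c|}\prod_{i \in \tau}(b_i(x)-(b_i)_Q)\prod_{l \in \tau^c}(b_l(y)-(b_l)_Q),
\end{equation*}
so that
\begin{equation*}
T_{\mathbf b}(f\chi_Q)(x) = \sum_{\tau \subseteq \tau_m}(-1)^{|\tau^c|}\prod_{i\in\tau}\bigl(b_i(x)-(b_i)_Q\bigr)\,T(h_\tau \chi_Q)(x),
\end{equation*}
where $h_\tau(y) = \prod_{l \in \tau^c}(b_l(y)-(b_l)_Q)f(y)$. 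Declare the principal cubes $\{Q_k\}$ to be the maximal subcubes of $\mathcal{D}(Q)$ for which, for some $\tau$,
\begin{equation*}
\avgint_{Q_k}|h_\tau| > \Lambda \avgint_Q|h_\tau| \quad\text{or}\quad \mathcal{M}_T(h_\tau\chi_Q) > \Lambda \avgint_Q|h_\tau| \text{ on } Q_k,
\end{equation*}
where $\mathcal{M}_T$ denotes the grand maximal truncation of $T$. The weak-type $(1,1)$ bounds for both $M$ and $\mathcal{M}_T$, combined with $\Lambda$ chosen sufficiently large depending on $m$ and $n$, yield $\sum_k|Q_k|\leq \tfrac{1}{2}|Q|$, which enforces the Carleson-type sparsity condition on $\mathcal{S}_j = \bigcup_{\ell\geq 0}\{Q^{(\ell)}\}$ obtained by iterating.

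Off the stopping cubes, the standard estimate $|T(h_\tau \chi_{\bigcup_k Q_k})(x)| \leq \mathcal{M}_T(h_\tau\chi_Q)(x)$ for a.e.\ $x \in Q \setminus \bigcup_k Q_k$ gives
\begin{equation*}
\bigl|T(h_\tau\chi_{Q \setminus \bigcup_k Q_k})(x)\bigr| \lesssim \avgint_Q|h_\tau|,
\end{equation*}
which, after restoring the outer factor $\prod_{i\in\tau}|b_i(x)-(b_i)_Q|$ and summing over $\tau$, is precisely the $Q$-contribution to the sparse operator $T^\tau_{\mathcal{S}_j,\mathbf b}$ with $\alpha=0$. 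Iterating on each $Q_k$ and telescoping in $\ell$ then produces the full pointwise bound.

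The main obstacle is the simultaneous stopping-time construction: one must ensure that the union of the $2^m$ stopping sets (one for each $\tau$) still has measure at most $\tfrac12|Q|$, which forces $\Lambda$ to grow with $m$ and $n$ and to dominate the $L^{1,\infty}$-norm of $\mathcal{M}_T$. Once this is arranged, the off-diagonal estimate for $T(h_\tau \chi_{\bigcup_k Q_k})$ via $\mathcal{M}_T$ and the binomial expansion above are purely algebraic, and no new analytical input is needed beyond the classical pointwise sparse domination of CZOs plus the fact that $\mathcal{M}_T$ is of weak-type $(1,1)$.
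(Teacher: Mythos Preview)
The paper does not actually prove this lemma; it merely quotes it as a simplified version of Proposition~2.1 in \cite{CZOSparseDom}. Your outline is exactly the Lerner-type recursive stopping-time argument used there, and it coincides with the scheme the paper itself writes out in full for the fractional analogue in Lemma~\ref{sparse domination for fraction}: expand $\prod_j(b_j(x)-b_j(y))$ around the averages $(b_j)_Q$, run a Calder\'on--Zygmund stopping time on the grand maximal truncation $\mathcal{M}_T$ applied to each $h_\tau$, and iterate. So the approach is correct and matches both the cited source and the paper's own parallel proof.

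Two cosmetic points worth tightening. First, the localization throughout should be to $3Q$ rather than $Q$, since the pointwise bound $|T(g\chi_{3Q_0})(x)|\lesssim \mathcal{M}_{T,Q_0}g(x)$ requires the enlarged cube; compare the paper's use of $3Q_0$ and $R_{Q_0}$ in Lemma~\ref{sparse domination for fraction}. Second, your sentence ``the standard estimate $|T(h_\tau\chi_{\bigcup_k Q_k})(x)|\le \mathcal{M}_T(h_\tau\chi_Q)(x)$'' has the characteristic functions reversed: what you actually need on each $Q_k$ is control of $T(h_\tau\chi_{3Q\setminus 3Q_k})$, and this follows because the stopping cubes are not entirely contained in the bad set (so some point of $Q_k$ witnesses $\mathcal{M}_{T,Q}(h_\tau)\le \Lambda\avgint_{3Q}|h_\tau|$, which by the $\sup$--$\esss$ definition of $\mathcal{M}_{T,Q}$ bounds $T(h_\tau\chi_{3Q\setminus 3Q_k})$ uniformly on $Q_k$). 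This is the ``telescoping'' you allude to, and it is precisely how the paper handles \eqref{term2}.
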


For $I_{\alpha,\mathbf{b}}$ there does not seem to be known pointwise sparse domination formulas, unless $b_1=b_2=\cdots=b_m$ (see \cite{MR4124126}), so we will prove one.  First we need the following lemmas.

\begin{lemma}\rm(cf. \cite{MR3695871})\label{Three Lattice Theorem}\,\
Given a dyadic lattice $\mathcal{D}$, there exist $3^n$ dyadic lattices $\mathcal{D}_1,\dots,\mathcal{D}_{3^n}$ such that
$$\{3Q:Q\in\mathcal{D}\}=\bigcup_{j=1}^{3^n}\mathcal{D}_j$$
and for each cube $Q\in\mathcal{D}$ we can find a cube $R_Q$ in each $\mathcal{D}_j$ such that $Q\subseteq R_Q$ and $3l_Q=l_{R_Q}$.
\end{lemma}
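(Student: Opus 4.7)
The plan is to handle the $1$-dimensional case by explicit construction of three shifted dyadic grids, then obtain the $n$-dimensional result by Cartesian products. By translating, I may assume that $\mathcal{D}$ is the standard dyadic lattice: $Q\in\mathcal{D}_k$ has the form $Q=2^k[m,m+1)$ for $m\in\Z$, so that $3Q=2^k[m-1,m+2)$ is a cube of length $3\cdot 2^k$ with left endpoint $2^k(m-1)$.

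I would seek three shifted dyadic grids with scale-$k$ cubes
\[
\bigl\{[(3p+s^{(k)})2^k,\,(3p+s^{(k)}+3)2^k):p\in\Z\bigr\},
\]
parameterized by a shift sequence $s^{(k)}\in\{0,1,2\}$. These tile $\R$ at each scale automatically. Nesting between consecutive scales forces each scale-$(k+1)$ cube to be the union of two scale-$k$ cubes from the same grid; matching left endpoints yields the compatibility relation $s^{(k)}\equiv 2\,s^{(k+1)}\pmod 3$. Since $x\mapsto 2x\pmod 3$ is a bijection on $\{0,1,2\}$, any choice of $s^{(0)}$ uniquely determines the sequence, giving exactly three admissible sequences and three corresponding shifted grids $\mathcal{D}_1,\mathcal{D}_2,\mathcal{D}_3$ whose scale-$k$ shifts satisfy $\{s_1^{(k)},s_2^{(k)},s_3^{(k)}\}=\{0,1,2\}$.

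The identity $\{3Q:Q\in\mathcal{D}\}=\bigcup_j\mathcal{D}_j$ then follows directly: $3Q$ has left endpoint with residue $(m-1)\bmod 3$, which matches the shift of exactly one $\mathcal{D}_j$, and conversely every cube in each $\mathcal{D}_j$ is realized as some $3Q$. For the covering-cube claim, the three length-$3\cdot 2^k$ cubes containing $Q=2^k[m,m+1)$ are $[(m-j)2^k,(m-j+3)2^k)$ for $j=0,1,2$, whose left endpoints exhaust the three residues modulo $3$. Hence each $\mathcal{D}_j$ contains exactly one such cube, which I take to be $R_Q$. The $n$-dimensional statement then follows by forming the $3^n$ Cartesian products $\mathcal{D}_{\vec j}=\mathcal{D}_{j_1}\times\cdots\times\mathcal{D}_{j_n}$; all properties transfer coordinatewise.

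The main obstacle is identifying the compatibility condition $s^{(k)}\equiv 2\,s^{(k+1)}\pmod 3$: it is what ensures nesting of the shifted grids and simultaneously pins the number of admissible sequences down to exactly three (hence $3^n$ in dimension $n$). Once this relation is established, all remaining verifications reduce to elementary modular arithmetic.
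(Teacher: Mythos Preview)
The paper does not supply its own proof of this lemma; it is quoted verbatim from \cite{MR3695871} (Lerner--Nazarov), so there is no in-paper argument to compare against. Your construction is the standard ``$1/3$-shift'' argument and is essentially the one Lerner and Nazarov give: build the three shifted grids in one dimension by solving the nesting recursion on the shifts, then take products. The modular identity $s^{(k)}\equiv 2\,s^{(k+1)}\pmod 3$ is exactly the right compatibility condition, and the remaining verifications (the union identity and the existence of $R_Q$) are correct as written.

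One small caveat: the reduction ``by translating, I may assume $\mathcal{D}$ is the standard dyadic lattice'' is not literally valid in the Lerner--Nazarov framework, where a dyadic lattice need not be a global translate of the standard one (the ascending chain of parents may involve choices that vary across scales). This is not a real obstacle: your argument only uses that at each fixed scale the cubes form a tiling $\{[(m+a_k)2^k,(m+1+a_k)2^k):m\in\Z\}$ for some offset $a_k$, and the same recursion $s^{(k)}\equiv 2\,s^{(k+1)}+ (\text{correction from }a_k,a_{k+1}) \pmod 3$ still has exactly three solutions. If you want the proof to match the generality of the stated lemma, either run the computation with the offsets $a_k$ present or simply remark that the combinatorics are unchanged.
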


Letting $\mathcal{D}$ be a dyadic lattice, we note that for any cube $Q\subseteq\mathbb{R}^n$ we can always find a cube $Q'\in\mathcal{D}$ such that $l_Q/2<l_{Q'}\leq l_Q$ and $Q\subset3Q'$. By the above lemma, for some $j\in\{1,\dots,3^n\}$, it is easy to see that $3Q'=P\in\mathcal{D}_j$. Hence, for each cube $Q\subset\mathbb{R}^n$, we can find a cube $P\in\mathcal{D}_j$ that satisfies $Q\subset P$ and $l_P\leq3l_Q$.

For a cube $Q_0\subset\mathbb{R}^n$, define the grand maximal truncated operator $\mathcal{M}_{I_\alpha}$ and local grand maximal truncated operator $\mathcal{M}_{I_\alpha,Q_0}$ by
$$\mathcal{M}_{I_\alpha}f(x)=\sup_{Q\ni x}\esss_{\xi\in Q}|I_\alpha(f\chi_{\mathbb{R}^n\backslash3Q})(\xi)|,$$
$$\mathcal{M}_{I_\alpha,Q_0}f(x)=\sup_{x\in Q\subset Q_0}\esss_{\xi\in Q}|I_\alpha(f\chi_{3Q_0\backslash3Q})(\xi)|,$$
respectively.

\begin{lemma}\rm(cf. \cite{MR4124126})\label{grand maximal truncated operator}
Let $0<\alpha<n$. Let $Q_0\subset \mathbb{R}^n$ be a cube. The following pointwise estimates holds:
\medskip

{\rm (1)} For a.e. $x\in Q_0$,
$$
|I_\alpha(f\chi_{3Q_0})(x)|\leq\mathcal{M}_{I_\alpha,Q_0}f(x);$$
\medskip

{\rm (2)} $\mathcal{M}_{I_\alpha}$ is bounded from $L^1(\mathbb{R}^n)$ to $L^{\frac{n}{n-\alpha},\infty}(\mathbb{R}^n)$.
\end{lemma}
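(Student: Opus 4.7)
The two parts require rather different arguments, and I would treat them separately. For part (1), the plan is to apply a Lebesgue-differentiation argument at a point $x \in Q_0$ that is a Lebesgue point of $I_\alpha(f\chi_{3Q_0})$. For any cube $Q$ containing $x$ with $Q \subseteq Q_0$, decompose
\[ I_\alpha(f\chi_{3Q_0})(\xi) = I_\alpha(f\chi_{3Q})(\xi) + I_\alpha(f\chi_{3Q_0 \setminus 3Q})(\xi), \qquad \xi \in Q. \]
The second summand is pointwise bounded by $\mathcal{M}_{I_\alpha, Q_0}f(x)$ directly from the definition of the local grand maximal operator, since $Q \ni x$ and $Q \subseteq Q_0$. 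For the first summand, the goal is to show that its essential supremum over $\xi \in Q$ behaves well as $Q$ shrinks to $\{x\}$: one can use an annular decomposition of $3Q$ around $\xi$ to produce an estimate of $|I_\alpha(f\chi_{3Q})(\xi)|$ in terms of $|Q|^{\alpha/n}$ times local $L^1$ averages of $f$. Averaging in $\xi$ over $Q$ and using the Lebesgue-point identity $|I_\alpha(f\chi_{3Q_0})(x)| = \lim_{Q \to \{x\}} \avgint_Q |I_\alpha(f\chi_{3Q_0})(\xi)|\,d\xi$ then yields (1) after a limit argument.

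For part (2), I would reduce to the classical weak-type bound for $I_\alpha$ itself. The key observation is that if $\xi \in Q \ni x$ and $y \notin 3Q$, then $|\xi - y| \geq c|x - y|$ for a dimensional constant $c > 0$. This gives
\[ |I_\alpha(f\chi_{\R^n \setminus 3Q})(\xi)| \leq \int_{\R^n \setminus 3Q} \frac{|f(y)|}{|\xi - y|^{n-\alpha}}\,dy \leq C \int_{\R^n} \frac{|f(y)|}{|x-y|^{n-\alpha}}\,dy = C\,I_\alpha(|f|)(x), \]
so that taking essential supremum in $\xi$ and supremum over cubes $Q \ni x$ yields the pointwise bound $\mathcal{M}_{I_\alpha}f(x) \leq C\,I_\alpha(|f|)(x)$. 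Since the Riesz potential satisfies the classical Hardy--Littlewood--Sobolev weak-type estimate $I_\alpha : L^1(\R^n) \to L^{n/(n-\alpha),\infty}(\R^n)$, the same bound transfers to $\mathcal{M}_{I_\alpha}$.

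The main obstacle will be the vanishing (or at least harmlessness) of the near part in (1): controlling $\esss_{\xi \in Q}|I_\alpha(f\chi_{3Q})(\xi)|$ as $\ell(Q) \to 0$ requires more than the pointwise bound used in (2), since one cannot exploit the separation $|\xi - y| \gtrsim |x - y|$ when $y$ lies inside $3Q$. Here the annular decomposition combined with the local integrability of $f$ near $x$, together with a careful use of the Lebesgue point hypothesis for $I_\alpha(f\chi_{3Q_0})$, is essential. Once this piece is in place, part (1) follows by a standard limit argument, and part (2) is immediate from the pointwise domination by $I_\alpha(|f|)$ together with the Hardy--Littlewood--Sobolev weak-type estimate.
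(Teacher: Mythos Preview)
The paper does not prove this lemma; it is quoted from \cite{MR4124126} without argument. Your outline is correct and follows the standard template (going back to Lerner's analogous lemma for Calder\'on--Zygmund operators, adapted to the fractional setting).

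One small remark on part~(1): your averaging-in-$\xi$ route works, but the usual variant is marginally more direct. Evaluate at $x$ itself:
\[ |I_\alpha(f\chi_{3Q_0})(x)| \leq |I_\alpha(f\chi_{3Q})(x)| + |I_\alpha(f\chi_{3Q_0\setminus 3Q})(x)|. \]
The far piece $I_\alpha(f\chi_{3Q_0\setminus 3Q})$ is continuous on $Q$ (the singularity is excised), so its value at $x$ is dominated by its essential supremum over $Q$, hence by $\mathcal{M}_{I_\alpha,Q_0}f(x)$. The near piece satisfies $|I_\alpha(f\chi_{3Q})(x)|\lesssim \ell(Q)^\alpha Mf(x)$ via the same annular decomposition you describe, and this vanishes as $Q\to\{x\}$ at any point where $Mf(x)<\infty$. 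Your averaging approach reaches the same conclusion; the only extra ingredient you need is $I_\alpha(f\chi_{3Q_0})\in L^1_{\mathrm{loc}}$, which follows from the weak-$(1,n/(n-\alpha))$ bound, so there is no real obstacle.

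Part~(2) is exactly as you outline: the elementary geometric inequality $|\xi-y|\geq c|x-y|$ for $\xi\in Q\ni x$ and $y\notin 3Q$ gives the pointwise domination $\mathcal{M}_{I_\alpha}f(x)\leq C\,I_\alpha(|f|)(x)$, and the Hardy--Littlewood--Sobolev weak-type estimate finishes the job.
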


\begin{lemma}\label{sparse domination for fraction}
Let $0<\alpha<n$, $I_\alpha$ be fractional integral operators and $\tau_m=\{1,\cdots,m\}$. Given $\textbf{b}(x)=(b_1(x),\cdots,b_m(x))$ with $b_i\in L_{\rm{loc}}^{1}(\mathbb{R}^n)$, there exists a constant $C=C(n,\al)$ so that for any $f\in L_c^\infty(\mathbb{R}^n)$, there exists $3^n$ sparse families $\mathcal{S}_j$ of dyadic cubes such that
\begin{align*}
|I_{\alpha,\textbf{b}}f(x)|&\leq C\sum_{j=1}^{3^n}\sum_{\tau\subseteq\{1,\ldots,m\}}T_{\mathcal{S}_j,\textbf{b}}^{\alpha, \tau}(f)(x).
\end{align*}
\end{lemma}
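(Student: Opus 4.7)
The plan is to adapt the standard recursive Lerner--P\'erez sparse domination argument from the Calder\'on--Zygmund setting of Lemma~\ref{CZO_Commuator_Sparse_Bound} to the fractional integral setting, using the local grand maximal truncation operator $\mathcal{M}_{I_\alpha, Q_0}$ supplied by Lemma~\ref{grand maximal truncated operator}. The first step is the algebraic identity: for any constants $c_1,\ldots,c_m$,
\[
\prod_{j=1}^m(b_j(x)-b_j(y)) = \sum_{\tau\subseteq\tau_m}(-1)^{|\tau^c|}\prod_{i\in\tau}(b_i(x)-c_i)\prod_{l\in\tau^c}(b_l(y)-c_l).
\]
Choosing $c_j=(b_j)_{3Q_0}$ inside $I_{\alpha,\mathbf{b}}(f\chi_{3Q_0})(x)$ decomposes it into a sum indexed by $\tau$ of a factor $\prod_{i\in\tau}(b_i(x)-(b_i)_{3Q_0})$ times $I_\alpha(g_\tau\chi_{3Q_0})(x)$, where $g_\tau(y)=\prod_{l\in\tau^c}(b_l(y)-(b_l)_{3Q_0})f(y)$. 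This reduces the task to controlling each $I_\alpha(g_\tau\chi_{3Q_0})$ while tracking the outer commutator factors.

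Fix a dyadic lattice $\mathcal{D}$ and $Q_0\in\mathcal{D}$ containing $\mathrm{supp}\,f$. I would establish the recursive estimate: there is a pairwise disjoint collection $\{P_k\}\subseteq\mathcal{D}(Q_0)$ with $\sum_k|P_k|\leq\tfrac12|Q_0|$ such that, for a.e.\ $x\in Q_0$,
\[
|I_{\alpha,\mathbf{b}}(f\chi_{3Q_0})(x)|\leq C\sum_\tau|3Q_0|^{\alpha/n}\prod_{i\in\tau}|b_i(x)-(b_i)_{3Q_0}|\avgint_{3Q_0}\prod_{l\in\tau^c}|b_l-(b_l)_{3Q_0}||f|+\sum_k|I_{\alpha,\mathbf{b}}(f\chi_{3P_k})(x)|\chi_{P_k}(x).
\]
The cubes $\{P_k\}$ are produced by a Calder\'on--Zygmund selection relative to the stopping set
\[
\Omega=\bigcup_{\tau}\Bigl(\{x\in Q_0:M(g_\tau\chi_{3Q_0})(x)>A\avgint_{3Q_0}|g_\tau|\}\cup\{x\in Q_0:\mathcal{M}_{I_\alpha,Q_0}(g_\tau)(x)>A|3Q_0|^{\alpha/n}\avgint_{3Q_0}|g_\tau|\}\Bigr),
\]
for a suitable large $A$. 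The weak $(1,1)$ bound for $M$ together with the weak $L^{n/(n-\alpha)}$ bound for $\mathcal{M}_{I_\alpha}$ from Lemma~\ref{grand maximal truncated operator}(2) forces $|\Omega|\leq |Q_0|/2^{n+2}$ once $A$ absorbs the factor $2^m$; the $P_k$ are then the maximal dyadic subcubes of $Q_0$ with $|P_k\cap\Omega|>\tfrac12|P_k|$. On $Q_0\setminus\bigcup_k P_k$, the pointwise bound $|I_\alpha(g_\tau\chi_{3Q_0})|\leq\mathcal{M}_{I_\alpha,Q_0}g_\tau$ from Lemma~\ref{grand maximal truncated operator}(1) is controlled by the leading term.

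I would then iterate the recursive estimate, starting from a family of dyadic cubes in $\mathcal{D}$ exhausting $\mathrm{supp}\,f$, to produce a $\tfrac12$-sparse family $\mathcal{S}\subseteq\mathcal{D}$. Since the cubes $3Q$ appearing on the right are not themselves in $\mathcal{D}$, Lemma~\ref{Three Lattice Theorem} replaces each $3Q$ by a cube of comparable side length lying in one of the $3^n$ auxiliary lattices $\mathcal{D}_1,\ldots,\mathcal{D}_{3^n}$. Grouping the sparse cubes by which $\mathcal{D}_j$ they fall into yields $3^n$ sparse subfamilies $\mathcal{S}_j$, and summing over $\tau$ and $j$ reproduces the claimed pointwise bound. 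The main technical obstacle is constructing a single stopping set $\Omega$ that controls all $2^m$ summands indexed by $\tau$ simultaneously, while keeping $|\Omega|$ small enough to sustain the geometric halving from step to step; this is why we stop on both $M(g_\tau)$ (to preserve $L^1$ averages across iterations as the reference constants $(b_j)_{3Q}$ change) and $\mathcal{M}_{I_\alpha,Q_0}(g_\tau)$ (to capture the fractional size), and it fixes the dependence of $A$ on $m$, $n$, and $\alpha$.
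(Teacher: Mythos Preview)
Your proposal is correct and follows essentially the same route as the paper: the algebraic expansion of the commutator kernel, the recursive local estimate driven by the grand maximal truncation $\mathcal{M}_{I_\alpha,Q_0}$, the Calder\'on--Zygmund selection of children $P_k$, and the passage to $3^n$ lattices via Lemma~\ref{Three Lattice Theorem} all match. The one deviation is that you also stop on $M(g_\tau)$, which is harmless but unnecessary---since the recursion returns the full commutator $I_{\alpha,\mathbf{b}}(f\chi_{3P_k})$ on each child, the reference constants $(b_j)_{3P_k}$ are chosen fresh at every generation and no $L^1$ average needs to be carried across levels.
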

\begin{proof}
As we previously noted there are $3^n$ dyadic lattices such that for any cube $Q\subset \mathbb{R}^n$, there is a cube $R_Q\in\mathcal{D}_j$ for some $j$, for which $3Q\subset R_Q$ and $|R_Q|\leq9^n|Q|$.

Following a similar scheme as in \cite{MR3695871}, it reduces to show that for any cube $Q_0\subset\mathbb{R}^n$, there is a 1/2-sparse family $\mathcal{S}\subset \mathcal{D}(Q_0)$ such that for a.e. $x\in Q_0$
\begin{align}\label{claim1}
|I_{\alpha,\textbf{b}}(f\chi_{3Q_0})(x)|&\leq C\sum_{Q\in\mathcal{S}}\Big[\sum_{\tau\subset\tau_m}
\Big(\prod_{i\in\tau}|b_i(x)-( b_i)_{R_Q}|\Big)
\Big(\Big(\prod_{k\in\tau_m\backslash\tau}|b_k-( b_k)_{R_Q}||f|\Big)_{3Q}\Big)\Big]\\
&\qquad\times|3Q|^{\alpha/n}\chi_Q(x).\nonumber
\end{align}

To prove \eqref{claim1}, it suffices to prove the following recursive estimate: there is a disjoint family of cubes $P_j\in\mathcal{D}(Q_0)$ such that $\sum_j|P_j|\leq|Q_0|/2$ and for a.e. $x\in Q_0$,
\begin{align}\label{claim2}
|I_{\alpha,\textbf{b}}(f\chi_{3Q_0})(x)|&\leq C\Big[\sum_{\tau\subseteq\tau_m}
\Big(\prod_{i\in\tau}|b_i(x)-( b_i)_{R_{Q_0}}|\Big)
\Big(\Big(\prod_{k\in\tau_m\backslash\tau}|b_k-( b_k)_{R_{Q_0}}||f|\Big)_{3Q_0}\Big)\Big]\\
&\qquad\times|3Q_0|^{\alpha/n}\chi_{Q_0}(x)+\sum_j|I_{\alpha,\textbf{b}}(f\chi_{3P_j})(x)|\chi_{P_j}(x).
\nonumber
\end{align}
Indeed, iterating \eqref{claim2}, we get \eqref{claim1} with $\mathcal{S}=\{P_j^k\}$, where $\{P_j^0\}=\{Q_0\}$, $\{P_j^1\}=\{P_j\}$ and $\{P_j^k\}$ are the cubes obtained at the $k$-th stage of this iterative process.

For any mutually disjoint cubes $P_j\in\mathcal{D}(Q_0)$, \eqref{claim2} follows from the below estimate
\begin{align}\label{claim3}
&|I_{\alpha,\textbf{b}}(f\chi_{3Q_0})|\chi_{Q_0\backslash\cup_jP_j}+\sum_j|
I_{\alpha,\textbf{b}}(f\chi_{3Q_0})-I_{\alpha,\textbf{b}}(f\chi_{3P_j})|\chi_{P_j}\\
&\quad\leq C\Big[\sum_{\tau\subseteq\tau_m}
\Big(\prod_{i\in\tau}|b_i(x)-( b_i)_{R_{Q_0}}|\Big)
\Big(\Big(\prod_{k\in\tau_m\backslash\tau}|b_k-( b_k)_{R_{Q_0}}||f|\Big)_{3Q_0}\Big)\Big]|3Q_0|^{\alpha/n}\chi_{Q_0}(x).\nonumber
\end{align}

Now we are in the position to prove \eqref{claim3}. Since $I_{\alpha,\mathbf{b-c}}=I_{\alpha,\mathbf{b}}$ for any vector constant $\mathbf{c}$, the left side of inequality \eqref{claim3} is bounded by
\begin{align}\label{claim4}
&\leq\sum_{\tau\subseteq\tau_m}\prod_{i\in\tau}|b_i-( b_i)_{R_{Q_0}}|\Big|I_\alpha\Big(\prod_{k\in\tau_m\backslash\tau}(b_k-( b_k)_{R_{Q_0}})f\Big)\Big|\chi_{Q_0\backslash\cup_jP_j}\\
&\qquad+\sum_j\sum_{\tau\subseteq\tau_m}\prod_{i\in\tau}|b_i-( b_i)_{R_{Q_0}}|
\Big|I_\alpha\Big(\prod_{k\in\tau_m\backslash\tau}(b_k-( b_k)_{R_{Q_0}})f\chi_{3Q_0\backslash3P_j}\Big)\Big)\Big|\chi_{P_j}.\nonumber
\end{align}
Define $E=\cup_{\tau\subseteq\tau_m}E_{\tau}$, where
\begin{align*}
E_\tau&=\Big\{x\in Q_0:\mathcal{M}_{I_\alpha,Q_0}\Big(\prod_{k\in\tau_m\backslash\tau}(b_k-( b_k)_{R_{Q_0}})f\Big)(x)\\
&\qquad>
C|3Q_0|^{\alpha/n}\Big(\prod_{k\in\tau_m\backslash\tau}|b_k-( b_k)_{R_{Q_0}}||f|\Big)_{3Q_0}\Big\}.
\end{align*}
Note that
$$\mathcal{M}_{I_\alpha,Q_0}g\leq\mathcal{M}_{I_\alpha}(g\chi_{3Q_0}).$$
Then for each $\tau$, by Lemma \ref{grand maximal truncated operator}, we have
\begin{align*}
|E_\tau|&\leq\Big(\frac{c_{n,\alpha}\int_{3Q_0}\Big|\prod_{k\in\tau_m\backslash\tau}
(b_k-( b_k)_{R_{Q_0}})f\Big|}{C|3Q_0|^{\alpha/n}\Big(\prod_{k\in\tau_m\backslash\tau}
|b_k-( b_k)_{R_{Q_0}}||f|\Big)_{3Q_0}}
\Big)^{\frac{n}{n-\alpha}}\\
&=3^n\big(\frac{c_{n,\alpha}}{C}\big)^{\frac{n}{n-\alpha}}|Q_0|.
\end{align*}
Choose $C$ big enough such that $|E|\leq\sum_{\tau\subseteq\tau_m}|E_\tau|\leq|Q_0|/2^{n+2}$.

Applying the Calder\'{o}n-Zygmund to $\chi_E$ on $Q_0$ at height $h=1/2^{n+1}$, we get mutually disjoint cubes $P_j\in\mathcal{D}(Q_0)$ such that for each $j$,
$$2^{-n-1}|P_j|\leq|P_j\cap E|\leq 2^{-1}|P_j|,~|E\backslash\cup_j P_j|=0,$$
which further implies that
$$\sum_j|P_j|<|Q_0|/2,~P_j\cap E^c\neq\emptyset.$$

We return to prove \eqref{claim3}. For $x\in Q_0\backslash\cup_jP_j$, then $x\notin E_\tau$ yields that
\begin{align}\label{term1}
&\sum_{\tau\subseteq\tau_m}\prod_{i\in\tau}|b_i-( b_i)_{R_{Q_0}}|\Big|I_\alpha\Big(\prod_{k\in\tau_m\backslash\tau}(b_k-( b_k)_{R_{Q_0}})f\Big)\Big|\chi_{Q_0\backslash\cup_jP_j}\\
&\quad\leq\sum_{\tau\subseteq\tau_m}\prod_{i\in\tau}|b_i-( b_i)_{R_{Q_0}}|\mathcal{M}_{I_\alpha,Q_0}\Big(\prod_{k\in\tau_m\backslash\tau}(b_k-( b_k)_{R_{Q_0}})f\Big)\nonumber\\
&\quad\leq C\sum_{\tau\subseteq\tau_m}\prod_{i\in\tau}|b_i-( b_i)_{R_{Q_0}}||3Q_0|^{\alpha/n}
\Big(\prod_{k\in\tau_m\backslash\tau}|b_k-( b_k)_{R_{Q_0}}||f|\Big)_{3Q_0}.\nonumber
\end{align}
On the other hand, fix some $j$, using $P_j\cap E^c\neq\emptyset$, we deduce that
\begin{align*}
\mathcal{M}_{I_\alpha,Q_0}\Big(\prod_{k\in\tau_m\backslash\tau}(b_k-( b_k)_{Q_0})f\Big)
\leq C|3Q_0|^{\alpha/n}(\prod_{k\in\tau_m\backslash\tau}|b_k-( b_k)_{R_{Q_0}}||f|)_{3Q_0}
\end{align*}
It follows that
\begin{align}\label{term2}
&\sum_j\sum_{\tau\subseteq\tau_m}\prod_{i\in\tau}|b_i-( b_i)_{R_{Q_0}}|
\Big|I_\alpha\Big(\prod_{k\in\tau_m\backslash\tau}(b_k-( b_k)_{R_{Q_0}})f\chi_{3Q_0\backslash3P_j}\Big)\Big|\chi_{P_j}\\
&\quad\leq C\sum_{\tau\subseteq\tau_m}\prod_{i\in\tau}|b_i-( b_i)_{R_{Q_0}}||3Q_0|^{\alpha/n}
\Big(\prod_{k\in\tau_m\backslash\tau}|b_k-( b_k)_{R_{Q_0}}||f|\Big)_{3Q_0}.\nonumber
\end{align}
Hence, \eqref{claim4}, \eqref{term1} and \eqref{term2} show \eqref{claim3}. This completes the proof.
\end{proof}

To prove Theorems \ref{Norm_ineq_for_CZO} and \ref{Norm_ineq_for_FracOp} it suffices to bound $T_{\mathcal{S}, \mathbf{b}}^{\alpha,\tau}$ for an arbitrary sparse family $\mathcal S$. We have the following norm inequality for a general sparse operator.

\begin{thm}\label{sparse_operator_norm_ineq}
    Let $1 < p \leq q < \infty$, $\mathbf{b}(x) = (b_1(x), \ldots , b_m(x))$ with $b_i \in L_{\normalfont{loc}}^1(\R^n)$.  Let $0 \leq \alpha < n$, $\tau \subseteq \{1,\ldots,m\}$, and $\mathcal{S}$ be a sparse family of dyadic cubes.  Assume that $A$ and $B$ are Young functions that satisfy $\bar{A} \in B_{q',s'}$, $\bar{B} \in B_{p,s}$ for some $p\leq s\leq q$.  If $u$ and $v$ are a pair of weights such that
    \begin{equation}\label{sparse_operator_condition}
        \mathsf{K}_{\al,\tau}=\sup_{Q\in \mathcal{S}} |Q|^{\frac{\alpha}{n}+\frac{1}{q}-\frac{1}{p}}
        \left\| \prod_{i \in \tau} (b_i(x) - (b_i)_Q)u^{\frac{1}{q}} \right\|_{A,Q} \left\| \prod_{l \in \tau^c} (b_l(y) - (b_l)_Q)v^{-\frac{1}{p}} \right\|_{B,Q},
    \end{equation}
    then $\|T_{\mathcal{S}, \mathbf{b}}^{\alpha,\tau}f\|_{L^q(u)} \lesssim \mathsf{K}_{\al,\tau} \|f\|_{L^p(v)}$.
\end{thm}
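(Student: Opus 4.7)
The plan is to dualize against $L^{q'}(u)$ and control each cube's contribution via two applications of the generalized Orlicz-Hölder inequality, before handling the resulting sum using sparseness together with two fractional Orlicz maximal operators. Since
\[ \|T_{\mathcal S,\mathbf b}^{\alpha,\tau}f\|_{L^q(u)} = \sup_{\|g\|_{L^{q'}(u)}=1}\left|\int_{\R^n}T_{\mathcal S,\mathbf b}^{\alpha,\tau}f\cdot g\,u\,dx\right|, \]
it suffices to control the double sum-integral obtained from the definition of $T_{\mathcal S,\mathbf b}^{\alpha,\tau}$ by $\mathsf K_{\alpha,\tau}\|f\|_{L^p(v)}\|g\|_{L^{q'}(u)}$.

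Fix $Q\in\mathcal S$. Factoring $u = u^{1/q}\cdot u^{1/q'}$ and $1 = v^{-1/p}\cdot v^{1/p}$, Lemma~\ref{Generalized_Young_Holders} applied to the dual Orlicz pair $(A,\bar A)$ yields
\[ \int_Q\prod_{i\in\tau}|b_i-(b_i)_Q|\,gu\,dx \lesssim |Q|\,\Big\|\prod_{i\in\tau}(b_i-(b_i)_Q)u^{1/q}\Big\|_{A,Q}\,\|gu^{1/q'}\|_{\bar A,Q}, \]
and similarly, applied to $(B,\bar B)$, to the $\tau^c$-average. Multiplying through by $|Q|^{\alpha/n}$ and invoking the hypothesis to absorb the two bump factors into $\mathsf K_{\alpha,\tau}$ reduces the problem to estimating
\[ \mathsf K_{\alpha,\tau}\sum_{Q\in\mathcal S}|Q|^{1-\frac1q+\frac1p}\,\|gu^{1/q'}\|_{\bar A,Q}\,\|fv^{1/p}\|_{\bar B,Q}. \]

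The central idea is to exploit the freedom of the parameter $s\in[p,q]$ from the hypotheses to split $|Q|^{1-\frac1q+\frac1p} = |Q|\cdot|Q|^{\beta/n}\cdot|Q|^{\gamma/n}$ with
\[ \frac{\beta}{n} = \frac{1}{s}-\frac{1}{q}, \qquad \frac{\gamma}{n} = \frac{1}{p}-\frac{1}{s}, \]
both nonnegative and summing to $1/p-1/q$. For every $x\in Q$, the definition of the fractional Orlicz maximals gives
\[ |Q|^{\beta/n}\|gu^{1/q'}\|_{\bar A,Q}\leq M_{\beta,\bar A}(gu^{1/q'})(x), \qquad |Q|^{\gamma/n}\|fv^{1/p}\|_{\bar B,Q}\leq M_{\gamma,\bar B}(fv^{1/p})(x). \]
Taking infima over $x\in E_Q$, using the sparseness bound $|Q|\lesssim|E_Q|$, and summing against the pairwise disjointness of $\{E_Q\}_{Q\in\mathcal S}$ dominates the remaining sum by
\[ \int_{\R^n}M_{\beta,\bar A}(gu^{1/q'})(x)\,M_{\gamma,\bar B}(fv^{1/p})(x)\,dx. \]

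A final Hölder with the conjugate pair $s',s$, together with the boundedness $M_{\beta,\bar A}\colon L^{q'}\to L^{s'}$ (guaranteed by $\bar A\in B_{q',s'}$) and $M_{\gamma,\bar B}\colon L^p\to L^s$ (guaranteed by $\bar B\in B_{p,s}$) from \cite{CM}, produces exactly $\|g\|_{L^{q'}(u)}\|f\|_{L^p(v)}$, completing the proof. The main technical point is the allocation of fractional orders: one must choose $\beta$ and $\gamma$ so that both fractional maximals are simultaneously bounded on the relevant Lebesgue spaces while their combined order $\beta+\gamma$ matches the excess scaling $n(1/p-1/q)$ of $|Q|$ --- this is precisely what the flexibility $s\in[p,q]$ in the hypotheses is designed to permit, and it is what produces the whole scale of admissible conditions behind Theorem~\ref{Norm_ineq_for_FracOp}.
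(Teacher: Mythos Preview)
Your argument is correct and follows essentially the same route as the paper: dualize, apply the generalized Orlicz--H\"older inequality on each cube to extract the bump constant $\mathsf K_{\alpha,\tau}$, split the residual power of $|Q|$ via the parameter $s$, and use sparseness together with the $B_{q',s'}$ and $B_{p,s}$ boundedness of the fractional Orlicz maximal operators. The only cosmetic difference is that you dualize against $g$ with $\|g\|_{L^{q'}(u)}=1$ (so that $gu^{1/q'}$ appears in $M_{\beta,\bar A}$), whereas the paper absorbs the weight into $g$ from the start and dualizes against $\|g\|_{L^{q'}}=1$; the two are equivalent under the substitution $g\leftrightarrow gu^{1/q'}$.
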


\begin{proof}
    By duality it suffices to show that
    \[ \int_{\R^n} T_{\mathcal{S}, \mathbf{b}}^{\alpha,\tau}f(x)g(x)u(x)^{\frac{1}{q}}dx \]
    is bounded by $\|f\|_{L^p(v)}$, given $\|g\|_{L^{q'}}=1$.  Let $p\leq s\leq q$.  Using the generalized H\"{o}lder inequality for Young functions and letting $\frac{\beta}{n} = \frac{1}{q'} - \frac{1}{s'}=\frac1s-\frac1q$  and  $\frac{\gamma}{n} = \frac{1}{p} - \frac{1}{s}$ we have
    \begin{align*}
        \left| \int_{\R^n} T_{\mathcal{S}, \mathbf{b}}^{\alpha,\tau}f(x)g(x)u(x)^{\frac{1}{q}}dx \right|
        \leq{}& \sum_{Q \in \mathcal{S}} |Q|^{\frac{\alpha}{n}+1} \left\| \prod_{i \in \tau} (b_i - (b_i)_Q)u^{\frac{1}{q}} \right\|_{A,Q}\|g\|_{\bar{A},Q}\\
        {}& \quad \times \left\| \prod_{l \in \tau^c} (b_l - (b_l)_Q) v^{-\frac{1}{p}}\right\|_{B,Q} \|fv^{\frac{1}{p}}\|_{\bar{B},Q}\\
        \leq{}&  \mathsf{K}_{\al,\tau} \sum_{Q \in \mathcal{S}} |Q|^{\frac{1}{p}-\frac{1}{q}+1}\|g\|_{\bar{A},Q}\|fv^{\frac{1}{p}}\|_{\bar{B},Q}\\
           ={}&  \mathsf{K}_{\al,\tau} \sum_{Q \in \mathcal{S}} |Q|^{\frac{1}{p}-\frac1s+\frac1s-\frac{1}{q}}\|g\|_{\bar{A},Q}\|fv^{\frac{1}{p}}\|_{\bar{B},Q}|Q|\\
        \lesssim{}&  \mathsf{K}_{\al,\tau} \sum_{Q \in \mathcal{S}} |Q|^{\frac{\beta}{n}}\|g\|_{\bar{A},Q} |Q|^{\frac{\gamma}{n}}\|fv^{\frac{1}{p}}\|_{\bar{B},Q}|E_Q|\\
        \leq{}&  \mathsf{K}_{\al,\tau} \int_{\R^n} M_{\beta,\bar{A}}(g)(x) M_{\gamma,\bar{B}}(fv^{\frac{1}{p}})(x)dx\\
       \leq{}&  \mathsf{K}_{\al,\tau} \|M_{\beta,\bar{A}}(g)\|_{L^{s'}}\|M_{\gamma,\bar{B}}(fv^{\frac{1}{p}})\|_{L^s}\\
        \leq{}&  \mathsf{K}_{\al,\tau} \|M_{\beta,\bar{A}}\|_{L^{q'} \rightarrow L^{s'}}\|M_{\gamma,\bar{B}}\|_{L^p \rightarrow L^s}\|f\|_{L^p(v)}.
    \end{align*}
\end{proof}


    %
    %

Using Theorem \ref{sparse_operator_norm_ineq}, and Lemmas \ref{CZO_Commuator_Sparse_Bound} and \ref{sparse domination for fraction}, we have the desired norm inequality for $I_{\alpha,\mathbf{b}}$, and with the restrictions $\alpha = 0$ and $p=q$, we also attain our norm inequality for $T_\mathbf{b}$.

Finally, we end with the proof of Theorem \ref{Norm_ineq_for_CZO_BMO} and note that the proof of Theorem \ref{Norm_ineq_for_FracOp_BMO} is similar.

\begin{proof}[Proof of Theorem \ref{Norm_ineq_for_CZO_BMO}]
    We show that with certain $\bar{A} \in B_{p'}$ and $\bar{B} \in B_p$ and $\mathbf b\in BMO^m$ then the conditions of Theorem \ref{Norm_ineq_for_CZO} are satisfied. Let $\tau \subseteq \{1,\ldots,m\}$ with $|\tau|=j$ and let $U(t) = t^p\log(e+t)^{(m+\frac{1}{p'})p + \delta}$ for some $\delta > 0$ and $\phi(t) = e^t - 1$.  We choose the following function $A$ so that $\bar{A} \in B_{p'}$:
    \[ \bar{A}(t) = \frac{t^{p'}}{\log(e+t)^{(1+\epsilon)}} \]
    for some $\epsilon > 0$ to be determined.  Then we have
    \begin{align*}
        A^{-1}(t) \approx{}& \frac{t^{\frac{1}{p}}}{\log(e+t)^{(1+\epsilon){\frac{1}{p'}}}},\\
        U^{-1}(t) \approx{}& \frac{t^{\frac{1}{p}}}{\log(e+t)^{m+\frac{1}{p'} + \frac{\delta}{p}}},\\
        \phi^{-1}(t) \approx{}& \log(e+t).
    \end{align*}
    We want to show $(\phi^{-1}(t))^j U^{-1}(t) \lesssim A^{-1}(t)$.  So
    \begin{align*}
        (\phi^{-1}(t))^j U^{-1}(t) \approx{}& \frac{t^{\frac{1}{p}}}{\log(e+t)^{\frac{1}{p'}+m-j+\frac{\delta}{p}}}\\
        \leq{}& \frac{t^{\frac{1}{p}}}{\log(e+t)^{\frac{1}{p'}+\frac{\delta}{p}}}.
    \end{align*}
    Choosing $\epsilon > 0$ such that $\frac{\epsilon}{p'} \leq \frac{\delta}{p}$ satisfies the conditions of Theorem \ref{Generalized_Young_Holders}.  By a nearly identical argument with $V(t) = t^{p'}\log(e+t)^{(m+\frac{1}{p})p'+\delta}$, we see that $$(\phi^{-1}(t))^{m-j} V^{-1}(t) \lesssim B^{-1}(t),$$ by letting $\epsilon$ be small enough so that $\frac{\epsilon}{p} \leq \frac{\delta}{p'}$.  With the exponential integrability of $BMO$ functions we have
    \begin{align*}
        \left\|\left( \prod_{i \in \tau} (b_i - (b_i)_Q) \right)u^{\frac{1}{p}}\right\|_{A,Q} \leq{}& 
    \left( \prod_{i \in \tau} \|b_i\|_{BMO} \right)\|u^{\frac{1}{p}}\|_{L^p(\log L)^{(m+\frac{1}{p'})p + \delta}}\\
    \intertext{and}
    \left\|\left( \prod_{l \in \tau^c} (b_l - (b_l)_Q) \right)v^{-\frac{1}{p}}\right\|_{B,Q} \leq{}& 
    \left( \prod_{l \in \tau^c} \|b_l\|_{BMO} \right) \|v^{-\frac{1}{p}}\|_{L^{p'}(\log L)^{(m+\frac{1}{p})p' + \delta}}
    \end{align*}
    With this we may use Theorem \ref{Norm_ineq_for_CZO} and have our desired result:
    \begin{align*}
        \|T_{\mathbf{b}}f\|_{L^p(u)} & \\ \lesssim{}&\sum_{\tau\subseteq \{1,\ldots,m\}} \sup_Q\left\|\left( \prod_{i \in \tau} (b_i - (b_i)_Q) \right)u^{\frac{1}{q}}\right\|_{A,Q} \left\|\left( \prod_{l \in \tau^c} (b_l - (b_l)_Q) \right)v^{-\frac{1}{p}}\right\|_{B,Q}\|f\|_{L^p(v)}\\
        \lesssim{}& \sup_Q \left( \prod_{i=1}^m \|b_i\|_{BMO} \right)\|u^{\frac{1}{p}}\|_{L^p(\log L)^{(m+\frac{1}{p'})p + \delta}}\|v^{-\frac{1}{p}}\|_{L^{p'}(\log L)^{(m+\frac{1}{p})p' + \delta}}\|f\|_{L^p(v)}\\
        \leq{}& \left( \prod_{i=1}^m \|b_i\|_{BMO} \right)\|f\|_{L^p(v)}.
    \end{align*}
\end{proof}

\section{Proof of Theorems \ref{main_app_CZO} and \ref{main_app_frac}}
In this section we prove our compactness results, Theorems \ref{main_app_CZO} and \ref{main_app_frac}.  

\begin{proof}[Proof of Theorem  \ref{main_app_CZO}]
    Consider the unit ball in $L^p(v)$,
    \begin{align*}
    B_{L^p(v)}:=\{f\in L^p(v):\|f\|_{L^p(v)}\leq1\}.
    \end{align*}
    We need to show that the set
    $$\mathcal{F}=T_{b}^m(B_{L^p(v)})$$
    satisfies $(a)$-$(c)$ of Lemma \ref{GuoZhao}. We will make some reductions to prove it. From the definition of $CMO(\mathbb{R}^n)$, for any $\epsilon>0$, there is a function $b_\epsilon\in C_c^\infty(\mathbb{R}^n)$ such that
    \begin{align*}
    \|b-b_\epsilon\|_{BMO(\mathbb{R}^n)}<\epsilon.
    \end{align*}
    Observe that
    \begin{align*}
    &\big(b(x)-b(y)\big)^m-\big(b_\epsilon(x)-b_\epsilon(y)\big)^m\\
    &\quad=
    \big((b-b_{\epsilon})(x)-(b-b_{\epsilon})(y)\big)\sum_{i+j=m-1}(b(x)-b(y))^i
    (b_{\epsilon}(x)-b_{\epsilon}(y))^j.
    \end{align*}
    This, together with Theorems \ref{Norm_ineq_for_CZO_BMO} and \ref{Norm_ineq_for_FracOp_BMO}, allows us to deduce that
    \begin{align*}
    &\big\|T_b^mf-T_{b_\epsilon}^mf\big\|_{L^p(\mu)}\\
    &\quad\leq\sum_{i=0}^{m-1}\|T_{\textbf{b}_i}^mf\|_{L^p(\mu)}\\
    &\quad\lesssim\sum_{i=0}^{m-1}\|b-b_\epsilon\|_{BMO}\|b_\epsilon\|_{BMO}^i
    \|b_\epsilon\|_{BMO}^{m-1-i}\lesssim\epsilon\|b\|_{BMO}^{m-1},
    \end{align*}
    where
    \begin{align*}
    \textbf{b}_0:=(b-b_\epsilon,\underbrace{b_\epsilon,\cdots,b_\epsilon}_{m-1}),~\textbf{b}_{m-1}:=
    (b-b_\epsilon,\underbrace{b,\cdots,b}_{m-1})
    \end{align*}
    and
    \begin{align*}
    \textbf{b}_i:=(b-b_\epsilon,\underbrace{b,\cdots,b}_{i},b_\epsilon,\cdots,b_\epsilon),~i\in\tau_{m-2}.
    \end{align*}
    Given that the space of compact operators $\mathcal{K}(L^p(v),L^p(u))$ is a closed subset of the space of bounded operators $\mathcal{B}(L^p(v),L^p(u))$, then $T_{b_\epsilon}^m f \rightarrow T_{b}^mf$ in the operator topology as $b_\epsilon \rightarrow b$ in $BMO$.  Hence, it suffices to show
    $$\mathcal{F}=T_{b}^m(B_{L^p(v)}),~b\in C_c^\infty(\mathbb{R}^n)$$
    satisfies $(a)$-$(c)$ of Lemma \ref{GuoZhao}.
    
    It will also be useful to employ a smooth truncation of our operators, an idea that goes back to Krantz and Li \cite{MR1835563} (see also \cite{CompactCommutatorsWeighted}). Given $\eta>0$, we let $K^\eta(x,y)$ be a smooth truncation of the $K(x,y)$ such that 
    \begin{enumerate}
        \item $K^\eta(x,y)=0$ if $|x-y|\leq \eta$;
        \item $K^\eta(x,y)=K(x,y)$ if $|x-y|> 2\eta$;
        \item $K^\eta$ satisfies the same size and regularity estimates as $K$, namely
        \begin{equation}\label{sizesmooth} |K^\eta(x,y)|\leq \frac{C}{|x-y|^{n}} \quad \text{and} \quad |\nabla K^\eta(x,y)|\leq \frac{C}{|x-y|^{n+1}}.\end{equation}
    \end{enumerate}
    Let $T^{\eta}$ be the operator associated with our truncated kernel $K^\eta$.  Note that
    \begin{equation}\label{truncate_bound}
        |T^{\eta} f(x)| \leq 
        C(Mf(x) + T^\sharp f(x)) 
    \end{equation}
    Thus $T^\eta$ is a bounded operator from $L^p(v)$ to $L^p(u)$.  Let $T_{b}^{m,\eta}$ be the iterated commutators associated with our truncated operator, we have
    \begin{align*}
        |T_{b}^{m,\eta}f(x) - T_{b}^m f(x)| \lesssim{}& \int_{|x-y| \leq 2\eta} |b(x) - b(y)|^m \frac{|f(y)|}{|x-y|^{n}}dy\\
        \lesssim{}& \|\nabla b\|_{L^\infty}^m \sum_{k=0}^\infty (\eta ^{-k})^{m - n} \int_{\eta 2^{-k} < |x-y| \leq \eta 2^{-k+1}} |f(y)|dy\\
        \leq{}& \eta^m \sum_{k=0}^\infty 2^{-km}(\eta 2^{-k})^{-n} \int_{|x-y| \leq \eta 2^{-k+1}} |f(y)|dy\\
        \lesssim{}& \eta^m M f(x).
    \end{align*}
 With this we have
    \[ \|T_{b}^{m,\eta}f - T_{b}^m f\|_{L^p(u)} \lesssim \eta^m \|M f\|_{L^p(u)} \lesssim \eta^m, \]
    which gives us that $T_{b}^{m,\eta} \rightarrow T_{b}^m$ as $\eta \rightarrow 0$ in the operator topology.  Therefore we must show that $\mathcal{F} = T_{b}^{m,\eta}(B_{L^p(v)})$ satisfies (a)-(c) of Lemma \ref{GuoZhao}.
    
    By \eqref{truncate_bound} it is easy to show that $L_{\alpha,b}^{m,\eta}$ is bounded from $L^p(v)$ to $L^p(u)$ and
    \[ \|T_{b}^{m,\eta}\|_{L^p(u)} \leq C\|f\|_{L^p(v)} \leq C, \]
    thus $\mathcal{F}$ satisfies (a) of Lemma \ref{GuoZhao}.  To satisfy (b) from Lemma \ref{GuoZhao} let $$A > \max\{1, 2\sup\{ |y| \,:\, y \in \mathsf{supp }\ b\}\}$$ and let $Q$ be a cube containing $\mathsf{supp}\,b$.  Then for $|x| > A$ and $y \in \mathsf{supp}\,b$ we have $|x-y| > |x|/2$ and
    \begin{align*}
        |T_{b}^{m,\eta} f(x)| \leq{}&
        \int_{\R^n} \frac{|b(x) - b(y)|^m}{|x-y|^{n}}|f(y)|dy \\ \leq{}&
        \frac{C\|b\|_{L^\infty}^m}{|x|^{n}} \int_{\mathsf{supp}\ b} |f(y)|dy\\
        \leq{}& \frac{C\|b\|_{L^\infty}}{|x|^{n}} \|f\|_{L^p(v)}\left( \int_Q v^{-\frac{p'}{p}} \right)^{\frac{1}{p'}}\\
        \leq{}& \frac{C_{b,v}}{|x|^{n}},
    \end{align*}
    where $C_{b,v}$ depends on $b$ and $v$, but not $f$.  To see this, note that
    \[ \left( \int_Q v^{-\frac{p'}{p}} \right)^{\frac{1}{p'}} \leq C|Q|^{\frac{1}{p'}}\|v^{-\frac{1}{p}}\|_{L^{p'}(\log L)^{2p'-1+\delta}}<\infty. \]
    Thus
    \begin{equation}\label{Vanish_at_inf_inequality}
         \int_{|x|>A} |T_{b}^{m,\eta} f(x)|^p u(x)dx \leq C_{b,v}^p   \int_{|x|>A} \frac{u(x)}{|x|^{np}}dx.
    \end{equation}
    We must show that the right-hand side of \eqref{Vanish_at_inf_inequality} is finite.  Noting that $M:L^p(v) \rightarrow L^p(u)$ and
    \[ M (\chi_{[-1,1]^n})(x) \geq \frac{C}{(|x|+1)^{n}}, \]  
    then
    \[ \int_{\R^n} \frac{u(x)}{(|x|+1)^{np}}dx \leq {C} \int_{\R^n} M (\chi_{[-1,1]^n})(x)^p u(x)dx < \infty. \]
    Thus we have
    \[ \lim_{|A| \rightarrow \infty} \left( \int_{|x|>A} |T_{b}^{m,\eta} f(x)|^p u(x)dx \right)^{1/p} = 0 \]
    and so $\mathcal{F}$ satisfies (b) of Lemma \ref{GuoZhao}.
    
    Finally, we check $(c)$ of Lemma \ref{GuoZhao} holds. Take $h\in\mathbb{R}^n$ such that $|h|\leq\frac{\eta}{2}$. It is direct that
    \begin{align}\label{equi decom}
    &T_{b}^{m,\eta}f(x+h)-T_{b}^{m,\eta}f(x)\\
    &\quad=\int_{\mathbb{R}^n}(b(x+h)-b(y))^m(K^\eta(x+h,y)-K^\eta(x,y))f(y)dy\nonumber\\
    &\qquad+\int_{\mathbb{R}^n}[(b(x+h)-b(y))^m-(b(x)-b(y))^m]K^\eta(x,y)f(y)dy\nonumber\\
    &\quad:=Af(x,h)+Bf(x,h)\nonumber.
    \end{align}
    
   The kernels $K^\eta(x+h,y)$ and $K^\eta(x,y)$ both vanish when $|x-y|<\eta/2$ and by \eqref{sizesmooth} we have
    \begin{align*}
    Af(x,h) &\lesssim\|b\|_{L^\infty}^m\int_{|x-y|\geq\eta/2}\frac{|h|}{|x-y|^{n+1}}|f(y)|dy\\
    &\leq \|b\|_{L^\infty}^m\sum_{k=0}^\infty  \int_{2^{k-1}\eta\leq|x-y|<2^{k}\eta}
    \frac{|h|}{|x-y|^{n+1}}|f(y)|dy\\
    &\lesssim\frac{|h|}{\eta}\|b\|_{L^\infty}^m\sum_{k=0}^\infty \frac{1}{2^k}
    \frac{1}{(\eta 2^{k-1})^{n-\alpha}}\int_{|x-y|\leq\eta 2^k}|f(y)|dy\\
    &\lesssim\frac{|h|}{\eta}\|b\|_{L^\infty}^m Mf(x).
    \end{align*}
    This gives us
    \begin{align}\label{A}
    \sup_{f\in B_{L^p(v)}}\|Af\|_{L^p(u)}\leq C_{b,\delta}|h|\|M f\|_{L^p(u)}\leq C_{b,\delta}|h|,
    \end{align}
    which will go to zero uniformly as $h\rightarrow0$.
    
    For $Bf$ we use the difference of powers formula to write:
    \begin{align*}
 \lefteqn{ (b(x+h)-b(y))^m-(b(x)-b(y))^m}\\
 &\hspace{3cm} =(b(x+h)-b(x)+b(x)-b(y))^m-(b(x)-b(y))^m\\ 
  &\hspace{3cm} =\sum_{k=1}^{m}{m \choose k} (b(x+h) - b(x))^k (b(x) - b(y))^{m-k}\\
  &\hspace{3cm}=\sum_{k=1}^{m}{m \choose k} (b(x+h) - b(x))^k \sum_{j=0}^{m-k}{m-k\choose j}b(x)^j b(y)^{m-k-j}.\\
      \end{align*}
We now break of the regions of integration for $B$ as follows 
  \[ Bf = \int_{|x-y| > 2\eta} + \int_{\eta \leq |x-y| < 2\eta} =: B_1 f + B_2 f \]
 We have the following estimates for $B_1 f$:
    \begin{align*}
        |B_1f(x,h)| &\leq \sum_{k=1}^{m}{m \choose k} |b(x+h) - b(x)|^k \sum_{j=0}^{m-k}{m-k\choose j}|b(x)|^j \\
        &\qquad \times\left|\int_{|x-y|>2\eta} K(x,y) b(y)^{m-k-j}f(y)\,dy\right|\\
        &\leq |h|\|\nabla b\|_{L^\infty}\sum_{k=1}^m{m \choose k}|b(x+h)-b(x)|^{k-1}\\ 
        &\qquad \times\sum_{j=0}^{m-k}{m-k\choose j}|b(x)|^jT^\sharp(b^{m-k-j}f)(x) \\
         &\leq C_b|h|\sum_{k=1}^m\sum_{j=0}^{m-k}T^\sharp(b^{m-k-j}f)(x). 
      \end{align*}
 Hence we have
 \begin{multline*} \|B_1f\|_{L^p(u)}\leq C_b|h|\sum_{k=1}^m\sum_{j=0}^{m-k}\|T^\sharp(b^{m-k-j}f)\|_{L^p(u)} \\ 
 \leq C_b|h|\sum_{k=1}^m\sum_{j=0}^{m-k}\|b^{m-k-j}f\|_{L^p(v)}\leq C_b|h| \|f\|_{L^p(v)}.\end{multline*}   
For $B_2 f$ we have
    \begin{align*}
        |B_2 f(x,h)| \lesssim{}& |h|\|\nabla b \|_{L^\infty}\|b\|_{L^\infty}^{m-1}\int_{\eta < |x-y| \leq 2\eta} \frac{|f(y)|}{|x-y|^{n}}dy\\
        \lesssim{}& C_b |h| \frac{1}{(2\eta)^{n}} \int_{|x-y| \leq 2\eta} |f(y)|dy\\
        \leq{}& C_b |h| M f(x)
        \intertext{which gives us}
        \|B_2 f\|_{L^q(u)} \leq{}& C_b|h|\|f\|_{L^p(v)}.
    \end{align*}
    This completes the equicontinuity argument and so by Lemma \ref{GuoZhao} proves compactness.
    
\end{proof}

\begin{proof}[Proof of Theorem \ref{main_app_frac}]
   The proof for $(I_\al)^m_b$ is similar to that of Theorem \ref{main_app_CZO} and we only sketch the details. However, there are some simplifications that we point out.
 Again, it suffices to show that 
    $$\mathcal{F}=(I_\al)_b^m(B_{L^p(v)})\subseteq L^q(u)$$
    satisfies (a)-(c) of Lemma \ref{GuoZhao}. By the same reductions we may  assume $b\in C_c^\infty(\R^n)$ and the truncated operator for $\eta>0$
    $$I^\eta_\al f(x)=\int_{|x-y|>\eta}K_\al^\eta(x,y)f(y)\,dy$$
    where 
    $$K_\al^\eta(x,y)=\left\{\begin{array}{cc}|x-y|^{\al-n}, & |x-y|\geq 2\eta \\ 0, & |x-y|\leq \eta\end{array}\right.$$
and 
$$|K_\al^\eta(x,y)|\leq \frac{1}{|x-y|^{n-\al}} \quad \text{and} \quad |\nabla K_\al^\eta(x,y)|\leq \frac{C}{|x-y|^{n-\al+1}}.$$
    Notice that 
    $$|I^\eta_\al f(x)|\leq I_\al(|f|)(x)$$
    and thus is bounded $I_\al^\eta:L^p(v)\ra L^q(u)$ when the weights satisfy condition \eqref{Iterated_Riesz_Commutator_Bump_Conditions}.  Moreover consider the truncated commutator
    $$(I^\eta_\al)_b^mf(x)=\int_{\R^n}{(b(x)-b(y))^m}K_\al^\eta(x,y)f(y)\,dy.$$ 
    Then we have     
    \begin{align*}
        |(I^\eta_\al)_b^mf(x) - (I_\al)_b^mf(x)| \lesssim \eta^m M_\alpha f(x),
    \end{align*}
    and this implies that that $(I^\eta_\al)_b^m$ is a bounded operator from $L^p(v)$ to $L^q(u)$ and
    \[ \|(I^\eta_\al)_b^mf - (I_\al)_b^mf\|_{L^q(u)} \lesssim \eta^m \|M_\alpha f\|_{L^p(u)} \lesssim \eta^m \]
    which gives us that $(I^\eta_\al)_b^m \rightarrow (I^\eta_\al)_b^m$ as $\eta \rightarrow 0$ in the operator topology.  Therefore we must show that $\mathcal{F} = (I^\eta_\al)_b^m(B_{L^p(v)})$ satisfies (a)-(c) of Lemma \ref{GuoZhao}. Since $(I^\eta_\al)_b^m$ is a bounded operator we have that $\mathcal{F}$ satisfies (a) of Lemma \ref{GuoZhao}.  The proof of (b) from Lemma \ref{GuoZhao} is completely analogous: for $A$ and let $Q$ be a cube containing $\mathsf{supp}\,b$ we have
$$ |(I^\eta_\alpha)_b^{m} f(x) \leq{} \frac{C_{b,v}}{|x|^{n-\alpha}}$$
and
    \begin{equation*}
         \int_{|x|>A} |(I^\eta_\alpha)_b^{m} f(x)|^q u(x)dx \leq C_{b,v}^q   \int_{|x|>A} \frac{u(x)}{|x|^{(n-\alpha)q}}dx.
    \end{equation*}
 Noting that $M_\alpha:L^p(v) \rightarrow L^q(u)$ and
    \[ M_\alpha (\chi_{[-1,1]^n})(x) \geq \frac{C}{(|x|+1)^{n-\alpha}}, \]  
   yields
    \[ \lim_{A \rightarrow \infty} \left( \int_{|x|>A} |(I^\eta_\alpha)_b^{m}  f(x)|^q u(x)dx \right)^{1/q} = 0 \]
    and so $\mathcal{F}$ satisfies (b) of Lemma \ref{GuoZhao}.
    
    To check (c) of Lemma \ref{GuoZhao} we take a similar approach. For $h\in\mathbb{R}^n$ such that $|h|\leq\frac{\eta}{2}$, we have
    \begin{align*}
    &(I^\eta_\alpha)_b^m f(x+h)-(I^\eta_\alpha)_b^m f(x)\\
    &\quad=\int_{\mathbb{R}^n}(b(x+h)-b(y))^m(K_\alpha^\eta(x+h,y)-K_\alpha^\eta(x,y))f(y)dy\\
    &\qquad+\int_{\mathbb{R}^n}[(b(x+h)-b(y))^m-(b(x)-b(y))^m]K_\alpha^\eta(x,y)f(y)dy\\
    &\quad=:Af(x,h)+Bf(x,h)\nonumber.
    \end{align*}
    
    One can see that $K_\alpha^\eta(x+h,y)$ and $K_\alpha^\eta(x,y)$ both vanish when $|x-y|<\frac{\eta}{2}$ and by the smoothness condition on $K_\al^\eta$ we have
    \begin{align*}
    Af(x,h) &\lesssim\|b\|_{L^\infty}^m\int_{|x-y|\geq\frac\eta2}\frac{|h|}{|x-y|^{n-\alpha+1}}|f(y)|dy\\
    &\leq \|b\|_{L^\infty}^m\sum_{k=0}^\infty  \int_{2^{k-1}\eta\leq|x-y|<2^{k}\eta}
    \frac{|h|}{|x-y|^{n-\alpha +1}}|f(y)|dy\\
    &\lesssim\frac{|h|}{\eta}\|b\|_{L^\infty}^m\sum_{k=0}^\infty \frac{1}{2^k}
    \frac{1}{(\eta 2^{k-1})^{n-\alpha}}\int_{|x-y|\leq 2^{k}\eta}|f(y)|dy\\
    &\lesssim\frac{|h|}{\eta}\|b\|_{L^\infty}^m M_\alpha f(x).
    \end{align*}
    This gives us
    \begin{align*}
    \sup_{f\in B_{L^p(v)}}\|Af\|_{L^q(u)}\leq C|h|\|M_\al f\|_{L^p(u)}\leq C|h|,
    \end{align*}
    which will go to zero uniformly as $h\rightarrow0$.
    
    For $Bf$ we use a slightly different power formula to write:
    \begin{flalign*}
        (b(x+h)-b(y))^m-&(b(x)-b(y))^m\\ =& (b(x+h) - b(x))  \left( \sum_{k=0}^{m-1} (b(x+h) - b(y))^k (b(x) - b(y))^{m-1-k} \right).
    \end{flalign*}
We now break of the regions of integration for $B$ as follows 
  \[ Bf = \int_{|x-y| > 2\eta} + \int_{\eta \leq |x-y| \leq 2\eta} := B_1 f + B_2 f \]
 We have the following estimates for $B_1 f$:

$$ |B_1f(x,h)| \lesssim{} |h|\|\nabla b \|_{L^\infty}\|b\|_{L^\infty}^{m-1}\int_{|x-y| > 2\eta} \frac{|f(y)|}{|x-y|^{n-\alpha}}dy\leq C_b|h|I_\al f(x),$$
which gives
$$        \|B_1 f \|_{L^q(u)} \lesssim{} |h|\|f\|_{L^p(v)}.$$
For $B_2 f$ we have
    \begin{align*}
        |B_2 f(x,h)| \lesssim{}& |h|\|\nabla b \|_{L^\infty}\|b\|_{L^\infty}^{m-1}\int_{\eta < |x-y| \leq 2\eta} \frac{|f(y)|}{|x-y|^{n-\alpha}}dy\\
        \lesssim{}& C_b |h| \frac{1}{(2\eta)^{n-\alpha}} \int_{|x-y| \leq 2\eta} |f(y)|dy\\
        \leq{}& C_b |h| M_\alpha f(x)
            \end{align*}
which yields,
$$        \|B_2 f\|_{L^q(u)} \leq{} C|h|\|f\|_{L^p(v)}.$$
    This completes the equicontinuity argument and so by Lemma \ref{GuoZhao} proves compactness.

\end{proof}

\bibliographystyle{plain}
\bibliography{biblio}

\end{document}